\documentclass{article}
\usepackage[%
journal=???,    
lang=british,   
]{ems-journal}

\usepackage{esint,dsfont}
\usepackage[backgroundcolor=olive, linecolor=olive, textsize=scriptsize]{todonotes}

\def\R{{\mathbb R}}
\def\N{\mathbb{N}}
\def\C{\mathbb{C}}
\def\Z{\mathbb{Z}}
\def\D{\mathbb{D}}

\def\F{\mathbb{F}}

\def\ii{\mathrm{i}}

\theoremstyle{plain}
\newtheorem{thm}{Theorem}[section]
\newtheorem{lem}[thm]{Lemma}
\newtheorem{prop}[thm]{Proposition}
\newtheorem{cor}[thm]{Corollary}

\theoremstyle{definition}

\newtheorem{rmk}[thm]{Remark}
\newtheorem{ques}[thm]{Question}


\hyphenation{ame-na-bi-li-ty}
\hyphenation{ame-na-ble}
\hyphenation{Cow-ling}
\hyphenation{Haa-ge-rup}

\begin{document}

\title{Some remarks on $M_d$-multipliers and approximation properties}
\titlemark{$M_d$-multipliers and approximation properties}



\emsauthor{1}{
	\givenname{Ignacio}
	\surname{Vergara}
	\mrid{1126379}
	\orcid{0000-0001-7144-4272}}{I.~Vergara}

\Emsaffil{1}{
	\department{Departamento de Matem\'atica y Ciencia de la Computaci\'on}
	\organisation{Universidad de Santiago de Chile}
	\address{Las Sophoras 173}
	\zip{9170020}
	\city{Estaci\'on Central}
	\country{Chile}
	\affemail{ign.vergara.s@gmail.com}}

\classification[43A07, 46L07, 46B28, 20E08, 22E30]{43A22}

\keywords{$M_d$-multipliers, approximation properties, Baumslag--Solitar groups, Lie groups}

\begin{abstract}
We prove an extension property for $M_d$-multipliers from a subgroup to the ambient group, showing that $M_{d+1}(G)$ is strictly contained in $M_d(G)$ whenever $G$ contains a free subgroup. Another consequence of this result is the stability of the $M_d$-approximation property under group extensions. We also show that Baumslag--Solitar groups are $M_d$-weakly amenable with $\boldsymbol\Lambda(\operatorname{BS}(m,n),d)=1$ for all $d\geq 2$. Finally, we show that, for simple Lie groups with finite centre, $M_d$-weak amenability is equivalent to weak amenability, and we provide some estimates on the constants $\boldsymbol\Lambda(G,d)$.
\end{abstract}

\maketitle



\section{Introduction}
This paper is concerned with $M_d$-multipliers of locally compact groups, and various notions of approximation properties associated to them. This class of functions was first introduced by Pisier \cite{Pis} for discrete groups, as a tool to study the Dixmier similarity problem. The definition was later extended to all locally compact groups by Battseren \cite{Bat,Bat2}, who also coined the term \emph{$M_d$-multiplier}.

Let $G$ be a locally compact group, and let $C_b(G)$ denote the algebra of bounded, continuous, complex-valued functions on $G$. For Banach spaces $E,F$, let $\mathbf{B}(E,F)$ denote the space of bounded linear operators from $E$ to $F$. Let $d\geq 2$ be an integer. We say that $\varphi\in C_b(G)$ is an $M_d$-multiplier of $G$ if there are bounded maps $\xi_i:G\to\mathbf{B}(\mathcal{H}_i,\mathcal{H}_{i-1})$ ($i=1,\ldots,d$), where $\mathcal{H}_i$ is a Hilbert space, $\mathcal{H}_0=\mathcal{H}_d=\C$, and
\begin{align}\label{phi_xi}
\varphi(t_1\cdots t_d)=\xi_1(t_1)\cdots \xi_d(t_d)1
\end{align}
for all $t_1,\ldots,t_d\in G$. We let $M_d(G)$ denote the space of $M_d$-multipliers of $G$, and we endow it with the norm
\begin{align*}
\|\varphi\|_{M_d(G)}=\inf\left\{\sup_{t_1\in G}\|\xi_1(t_1)\|\cdots\sup_{t_d\in G}\|\xi_d(t_d)\|\right\},
\end{align*}
where the infimum is taken over all decompositions as in \eqref{phi_xi}. With this norm, $M_d(G)$ becomes a Banach algebra for pointwise operations. Observe that $M_{d+1}(G)\subseteq M_d(G)$ for all $d\geq 2$.

For $d=2$, $M_2(G)$ is the algebra of Herz--Schur multipliers, which is at the heart of the definition of weak amenability \cite{CowHaa,Ver3}, and other approximation properties generalising it, such as the AP \cite{HaaKra} and the weak Haagerup property \cite{Knu}. It turns out that similar approximation properties can be defined analogously, using the algebra $M_d(G)$ instead.

The $M_d$-approximation property ($M_d$-AP) was introduced in \cite{Ver} as a strengthening of the AP of Haagerup and Kraus \cite{HaaKra}, with the goal of giving a partial answer to the Dixmier problem. In order to define it, we need to view $M_d(G)$ as a dual Banach space. The general definition for locally compact groups that we present here is due to Battseren \cite{Bat,Bat2}. Let $L^1(G)$ denote the $L^1$ space on $G$, endowed with a left Haar measure. We define the space $X_d(G)$ as the completion of $L^1(G)$ for the norm
\begin{align*}
\|g\|_{X_d(G)}=\sup\left\{\left|\int_G\varphi(t)g(t)\,dt\right|\ \mid\ \varphi\in M_d(G),\ \|\varphi\|_{M_d(G)}\leq 1\right\}.
\end{align*}
Then $X_d(G)^*=M_d(G)$ for the duality
\begin{align*}
\langle\varphi,g\rangle=\int_G\varphi(t)g(t)\,dt
\end{align*}
for all $\varphi\in M_d(G)$, $g\in L^1(G)$; see \cite[Theorem 0.3]{Bat2}. Let us mention that, when $G$ is discrete, $X_d(G)$ may also be defined as a quotient of the $n$-fold Haagerup tensor product $\ell^1(G)\otimes_h\cdots\otimes_h\ell^1(G)$; see \cite[\S 3]{Pis}. The locally compact case is more subtle; see \cite{Bat2} for details.

Let $C_c(G)$ be the subalgebra of $C_b(G)$ given by all continuous, compactly supported functions on $G$. We say that $G$ has the $M_d$-AP if the constant function 1 belongs to the $\sigma(M_d(G),X_d(G))$-closure of $C_c(G)$ in $M_d(G)$. For every $d\geq 2$, we have
\begin{align*}
M_{d+1}\text{-AP}\implies M_d\text{-AP}
\end{align*}
because the inclusion $M_{d+1}(G)\hookrightarrow M_d(G)$ is weak*-weak*-continuous. Moreover, $M_2$-AP is exactly the AP of Haagerup and Kraus \cite{HaaKra}. It is not known whether any of the implications above is an equivalence.

The main motivation behind the definitions of $M_d$-multipliers and $M_d$-AP is the study of the Dixmier problem. A group $G$ is said to be unitarisable if every uniformly bounded representation of $G$ on a Hilbert space is similar to a unitary representation. This property is satisfied by $\Z$ \cite{Szo}, and, more generally, by every amenable group \cite{Day,Dix,NakTak}. The Dixmier problem asks whether the converse is also true: is every unitarisable group amenable? This question remains open, but some partial answers have been given. The following result was proved in \cite{Ver}.

\begin{thm}[{\cite[Theorem 1.2]{Ver}}]\label{Thm_unit+MdAP}
Let $G$ be a discrete group. If $G$ is unitarisable and satisfies $M_d$-AP for all $d\geq 2$, then it is amenable.
\end{thm}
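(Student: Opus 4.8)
The plan is to apply the $M_d$-AP once, for a value of $d$ furnished by Pisier's study of unitarisable groups, and use it to show that the constant function $1$ lies in the reduced Fourier--Stieltjes algebra $B_\lambda(G)$; since $1\in B_\lambda(G)$ says exactly that the trivial representation is weakly contained in the regular one, Hulanicki's theorem then forces $G$ to be amenable. All the weight of the argument sits in the input from Pisier, and everything else is soft functional analysis.

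\textbf{Step 1 (input from Pisier).} I would invoke the following theorem of Pisier \cite{Pis}: if $G$ is unitarisable, then there exist an integer $d_0\geq 2$ and a constant $C$ such that $M_{d_0}(G)=B(G)$ as sets, with $\|\varphi\|_{B(G)}\leq C\|\varphi\|_{M_{d_0}(G)}$ for every $\varphi\in M_{d_0}(G)$. (Heuristically, a uniform bound on the similarity constants of the uniformly bounded representations of $G$ makes the decreasing chain $M_2(G)\supseteq M_3(G)\supseteq\cdots$ collapse onto $B(G)$ at a finite stage $d_0$.) Since $B(G)\subseteq M_{d_0}(G)$ and $\|\varphi\|_{M_{d_0}(G)}\leq\|\varphi\|_{B(G)}$ hold for any locally compact group, the content of this statement is the norm estimate above, and this is the only place where unitarisability is used.

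\textbf{Step 2 (identifying $X_{d_0}(G)$ with $C^*(G)$).} Next I would compare the predual $X_{d_0}(G)$ with $C^*(G)$. For $g\in\ell^1(G)$ one has, by definition, $\|g\|_{X_{d_0}(G)}=\sup\{\,|\sum_t\varphi(t)g(t)| : \varphi\in M_{d_0}(G),\ \|\varphi\|_{M_{d_0}(G)}\leq 1\,\}$, and combining Step 1 with the identity $B(G)=C^*(G)^*$ gives the two-sided bound $\|g\|_{C^*(G)}\leq\|g\|_{X_{d_0}(G)}\leq C\|g\|_{C^*(G)}$. Since $\ell^1(G)$ is dense in $X_{d_0}(G)$ by construction and in $C^*(G)$ as well, the identity on $\ell^1(G)$ extends to an isomorphism of Banach spaces $X_{d_0}(G)\cong C^*(G)$, and a check on $\ell^1(G)$ shows that it intertwines the duality $\langle M_{d_0}(G),X_{d_0}(G)\rangle$ with $\langle B(G),C^*(G)\rangle$. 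Hence the topology $\sigma(M_{d_0}(G),X_{d_0}(G))$ agrees with the weak* topology $\sigma(B(G),C^*(G))$ on $M_{d_0}(G)=B(G)$.

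\textbf{Step 3 (conclusion, and the main difficulty).} By the $M_{d_0}$-AP, the constant function $1$ lies in the $\sigma(M_{d_0}(G),X_{d_0}(G))$-closure of $C_c(G)=\C[G]$ in $M_{d_0}(G)$, which by Step 2 says precisely that $1$ belongs to the weak* closure of $\C[G]$ in $B(G)=C^*(G)^*$. The annihilator of $\C[G]$ inside $C^*(G)$ is the kernel of the left regular representation $\lambda\colon C^*(G)\to C^*_\lambda(G)$, so by the bipolar theorem this weak* closure equals $C^*_\lambda(G)^*=B_\lambda(G)$; thus $1\in B_\lambda(G)$ and $G$ is amenable by Hulanicki's theorem. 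The genuine difficulty is Step 1 --- locating and quoting the correct quantitative form of Pisier's theorem; the use of unitarisability there is essential, since without it one obtains at best a net in $M_2(G)$ converging to $1$, bounded or not, which is far too weak (free groups have the AP, are even weakly amenable, yet are non-amenable). Finally, the hypothesis ``$M_d$-AP for all $d\geq 2$'' enters only through the single integer $d_0$, and is stated for all $d$ because Pisier's theorem does not pin $d_0$ down.
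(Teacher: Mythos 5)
The paper does not actually prove this statement: it is quoted verbatim from \cite[Theorem 1.2]{Ver}, so there is no in-paper argument to compare yours against. That said, your proof is correct and is, as far as I can tell, essentially the argument of the cited source. The only substantive input is the one you isolate in Step 1 --- Pisier's theorem that unitarisability forces $M_{d_0}(G)=B(G)$ with equivalent norms for some $d_0$ --- and this is indeed what Pisier proves in \cite{Pis}; the weaker stabilisation $M_d(G)=M_{d_0}(G)$ for $d\geq d_0$, quoted in the present paper's introduction, is a consequence of it (since $B(G)\subseteq M_d(G)\subseteq M_{d_0}(G)$ for all $d\geq d_0$). After that, the identification $X_{d_0}(G)\cong C^*(G)$ by dualising the two-sided norm comparison on $\C[G]$, the bipolar computation identifying the weak\textsuperscript{*}-closure of $\C[G]$ in $B(G)$ with $B_\lambda(G)$, and Hulanicki's theorem are all routine and correctly executed. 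Two minor points of hygiene: in Step 3 the set you use is the \emph{pre-annihilator} of $\C[G]$ in $C^*(G)$ rather than the annihilator, and the identification of that set with $\ker\lambda$ deserves a line (if $\langle\delta_s,a\rangle=0$ for all $s$ then $\lambda(a)\delta_e=0$, and since $\lambda(a)$ commutes with right translations and the translates of $\delta_e$ span $\ell^2(G)$, this gives $\lambda(a)=0$).
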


In light of this result, it becomes relevant to determine how large the class of groups satisfying $M_d$-AP is. In particular, the following question remains open.

\begin{ques}
Is $M_2$-AP equivalent to $M_d$-AP for all $d\geq 3$?
\end{ques}

Let us mention that $M_2$-AP (AP) is a very weak property. When it was introduced in \cite{HaaKra}, the only known examples of groups failing to satisfy this property were non-exact groups; see \cite[\S 12.4]{BroOza}. After considerable work, the list was expanded in order to include higher rank algebraic groups and their lattices \cite{HaKndL,HaadLa,HaadLa2,LafdlS,Lia}, and $\tilde{A}_2$-lattices \cite{LeSaWi}. To the author's knowledge, no more examples have been found.

In \cite{Ver}, several examples of groups satisfying $M_d$-AP were given, including all groups acting properly on finite-dimensional $\operatorname{CAT}(0)$ cube complexes; see \cite[Theorem 1.3]{Ver}. Moreover, it was shown in \cite[Lemma 4.3]{Ver} that $M_d$-AP is stable under extensions, with the additional hypothesis that the normal subgroup appearing in the exact sequence is amenable. Our first result asserts that this is true in general.

\begin{thm}\label{Thm_MdAP_ext}
Let $G$ be a discrete group, $\Gamma$ a normal subgroup of $G$, and $d\geq 2$. If both $\Gamma$ and $G/\Gamma$ satisfy $M_d$-AP, then so does $G$.
\end{thm}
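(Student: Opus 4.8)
The plan is to prove Theorem~\ref{Thm_MdAP_ext} by feeding the $M_d$-approximation properties of $\Gamma$ and of $G/\Gamma$ into the two natural maps relating their $M_d$-multiplier algebras to $M_d(G)$: the pull-back $q^\ast\colon M_d(G/\Gamma)\to M_d(G)$, $\varphi\mapsto\varphi\circ q$, along the quotient map $q$, and the extension-by-zero map $\mathcal{E}\colon M_d(\Gamma)\to M_d(G)$. First I would record the relevant properties of these maps. Write $K$ for the $\sigma(M_d(G),X_d(G))$-closure of $C_c(G)$ in $M_d(G)$; since $C_c(G)$ is a subspace, $K$ is a weak*-closed subspace, hence stable under finite linear combinations and weak*-limits, and it is invariant under translations, translation being a weak*-continuous isometry of $M_d(G)$ (pre-adjoint: translation of $\ell^1(G)$) that preserves $C_c(G)$. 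The map $q^\ast$ is a contractive algebra homomorphism, and it is weak*-continuous, its pre-adjoint being the contraction $\ell^1(G)\to\ell^1(G/\Gamma)$ that sums over cosets of $\Gamma$ (a contraction for the $X_d$-norms because $\|\varphi\circ q\|_{M_d(G)}\le\|\varphi\|_{M_d(G/\Gamma)}$). The map $\mathcal{E}$ is bounded: this boundedness is exactly the extension property for $M_d$-multipliers, and it is the one non-formal ingredient. Once $\mathcal{E}$ is bounded it is automatically weak*-continuous, its pre-adjoint being restriction $\ell^1(G)\to\ell^1(\Gamma)$, bounded for the $X_d$-norms precisely because $\mathcal{E}$ is bounded on $M_d$; moreover $\mathcal{E}$ carries $C_c(\Gamma)$ into $C_c(G)$ ($\Gamma$ being open in the discrete group $G$) and sends the constant function $1$ to the indicator $1_\Gamma\in M_d(G)$.

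With these facts in hand the argument is short. Using the $M_d$-AP of $\Gamma$, take a net $w_j\in C_c(\Gamma)$ with $w_j\to1$ in $\sigma(M_d(\Gamma),X_d(\Gamma))$; then $\mathcal{E}(w_j)\in C_c(G)\subseteq K$, and by weak*-continuity of $\mathcal{E}$ we get $\mathcal{E}(w_j)\to\mathcal{E}(1)=1_\Gamma$, so $1_\Gamma\in K$, whence $1_{g\Gamma}\in K$ for every $g\in G$ by translation-invariance of $K$. Next use the $M_d$-AP of $G/\Gamma$: take a net $v_i\in C_c(G/\Gamma)$ with $v_i\to1$ in $\sigma(M_d(G/\Gamma),X_d(G/\Gamma))$. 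For each $i$ the function $q^\ast v_i=v_i\circ q$ is constant on cosets of $\Gamma$ and supported on finitely many of them, so $q^\ast v_i=\sum_{\bar g\in\operatorname{supp}v_i}v_i(\bar g)\,1_{g\Gamma}$ is a finite linear combination of coset indicators, hence $q^\ast v_i\in K$. Since $q^\ast$ is weak*-continuous, $q^\ast v_i\to q^\ast1=1$, and since $K$ is weak*-closed we conclude $1\in K$; that is, $G$ has the $M_d$-AP.

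The soft-analysis assembly above is routine; the hard part will be the boundedness of extension by zero $\mathcal{E}\colon M_d(\Gamma)\to M_d(G)$, i.e.\ the extension property itself. For $d=2$ it is easy: the Herz--Schur symbol $(x,y)\mapsto\mathcal{E}(\varphi)(x^{-1}y)$ is block-diagonal along the partition of $G$ into left cosets of $\Gamma$, each diagonal block being a copy of $(x,y)\mapsto\varphi(x^{-1}y)$, so $\|\mathcal{E}(\varphi)\|_{M_2(G)}=\|\varphi\|_{M_2(\Gamma)}$. For $d\ge3$ one wants the same block-diagonal principle for the $d$-fold factorisation norm computing $\|\cdot\|_{M_d}$, but one should not expect to verify it by the naive device of building a factorisation $\mathcal{E}(\varphi)(t_1\cdots t_d)=\xi_1(t_1)\cdots\xi_d(t_d)1$ out of one for $\varphi$ by propagating coset data through auxiliary Hilbert spaces: once $[G:\Gamma]=\infty$ and $d\ge3$ those $\xi_i$ become unbounded, since a single Hilbert-space index no longer simultaneously encodes the coset constraint $\overline{t_1\cdots t_d}=e$ and the conjugation twists coming from a set-theoretic section of $q$. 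One therefore has to argue at the level of the multilinear Schur/factorisation norm and exploit its good behaviour under direct sums. I expect that to be the crux; granting it, the stability of the $M_d$-AP under extensions is obtained exactly as above, with no recourse to amenability of $\Gamma$.
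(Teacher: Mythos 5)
Your soft assembly is fine and is essentially the paper's own argument in disguise: your finite linear combinations $\sum_{\bar g}v_i(\bar g)\,1_{g\Gamma}$ of coset indicators are exactly the functions $f\ast\mathds{1}_\Gamma$ that the paper puts into the weak*-closure of $\C[G]$, and your use of the pre-adjoints (restriction $\ell^1(G)\to\ell^1(\Gamma)$ and summation over cosets) to get weak*-continuity is also how the paper proceeds. The problem is that you have not proved the one statement you yourself identify as the crux, namely the boundedness of extension-by-zero $\mathcal{E}\colon M_d(\Gamma)\to M_d(G)$ for $d\geq 3$. You gesture at ``the multilinear Schur/factorisation norm and its good behaviour under direct sums'' without carrying anything out, so as it stands the proof has a genuine gap exactly where the new content of the theorem lies (the paper's Lemma~\ref{Lem_ext}; the case where $\Gamma$ is amenable was already known, and removing that hypothesis is precisely this lemma).

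Worse, the reason you give for abandoning the direct construction is incorrect. Building a factorisation of $\mathcal{E}(\varphi)$ by ``propagating coset data through auxiliary Hilbert spaces'' is exactly what works, and the operators do \emph{not} become unbounded when $[G:\Gamma]=\infty$. Fix a section $\sigma$ of $q$ with $\sigma(q(e))=e$ and the cocycle $\alpha(s,x)=\sigma(sx)^{-1}s\sigma(x)\in\Gamma$; set $\tilde{\mathcal{H}}_i=\ell^2(G/\Gamma)\otimes\mathcal{H}_i$ for $1\leq i\leq d-1$ and
\begin{align*}
\tilde{\xi}_i(s)(\delta_x\otimes v)=\delta_{sx}\otimes\xi_i(\alpha(s,x))v ,
\end{align*}
with the obvious modifications at $i=d$ (feed in $\delta_{q(s)}\otimes\xi_d(\alpha(s,q(e)))1$) and at $i=1$ (pair $\delta_{sx}$ against $\delta_{q(e)}$). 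Since $x\mapsto sx$ permutes $G/\Gamma$, the images of an orthogonal family $\{\delta_{x_j}\otimes v_j\}$ stay orthogonal, so $\sup_s\|\tilde{\xi}_i(s)\|\leq\sup_{t\in\Gamma}\|\xi_i(t)\|$; the single index $x$ does simultaneously carry the coset constraint (enforced at the last step by the pairing with $\delta_{q(e)}$) and the conjugation twist (absorbed by the cocycle identity $\alpha(s_1\cdots s_d,q(e))=\alpha(s_1,q(s_2\cdots s_d))\cdots\alpha(s_d,q(e))$, which telescopes the product back to $\varphi(s_1\cdots s_d)$ when $s_1\cdots s_d\in\Gamma$ and to $0$ otherwise). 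This gives $\|\mathcal{E}(\varphi)\|_{M_d(G)}\leq\|\varphi\|_{M_d(\Gamma)}$, after which your argument (and the paper's) goes through. You should replace your dismissal of this construction with the construction itself.
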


In particular, we get the following corollary.

\begin{cor}\label{Cor_MdAP_prod}
For every $d\geq 2$, the $M_d$-AP for discrete groups is stable under direct products, semidirect products, and free products.
\end{cor}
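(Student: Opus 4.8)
The plan is to derive all three statements from Theorem~\ref{Thm_MdAP_ext}, using that $M_d$-AP is trivially preserved under group isomorphisms and that it suffices to treat binary products, the general finite case following by induction on the number of factors.

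For \emph{direct products}, write $G=G_1\times G_2$ with $G_1,G_2$ satisfying $M_d$-AP, and let $\Gamma=G_1\times\{e\}$. Then $\Gamma$ is a normal subgroup of $G$ with $\Gamma\cong G_1$ and $G/\Gamma\cong G_2$, so both $\Gamma$ and $G/\Gamma$ satisfy $M_d$-AP; Theorem~\ref{Thm_MdAP_ext} then gives that $G$ does. The \emph{semidirect product} case is the same observation: for $G=N\rtimes H$ the subgroup $N$ is normal with $G/N\cong H$, so if $N$ and $H$ satisfy $M_d$-AP then Theorem~\ref{Thm_MdAP_ext} applies directly.

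The \emph{free product} case requires one extra ingredient. Let $G=G_1*G_2$ with $G_1,G_2$ satisfying $M_d$-AP, consider the canonical surjection $\pi\colon G_1*G_2\to G_1\times G_2$, and put $N=\ker\pi$. Since the composite $G_i\hookrightarrow G_1*G_2\to G_1\times G_2$ is injective for $i=1,2$, the subgroup $N$ intersects every conjugate of $G_1$ and of $G_2$ trivially; by the Kurosh subgroup theorem, $N$ is a free group. Free groups satisfy $M_d$-AP for all $d\geq 2$, because they act freely, hence properly, on their Cayley trees, which are $1$-dimensional $\operatorname{CAT}(0)$ cube complexes, so \cite[Theorem 1.3]{Ver} applies. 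Moreover $G/N\cong G_1\times G_2$ satisfies $M_d$-AP by the direct product case already treated. Applying Theorem~\ref{Thm_MdAP_ext} to the normal subgroup $N$ of $G$ shows that $G=G_1*G_2$ satisfies $M_d$-AP.

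I expect the free product step to be the only real point: in contrast with direct and semidirect products, $G_1*G_2$ carries no obvious normal subgroup with quotient a factor, so one must produce the ``Cartesian'' kernel $N$ and invoke the Kurosh subgroup theorem to see that it is free---at which point the cube-complex result of \cite{Ver} supplies its $M_d$-AP. The remaining content is a direct appeal to Theorem~\ref{Thm_MdAP_ext} together with an induction to pass from binary to arbitrary finite products.
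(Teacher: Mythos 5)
Your proof is correct and takes essentially the same route as the paper: both cases of (semi)direct products are an immediate application of Theorem~\ref{Thm_MdAP_ext} to the evident normal subgroup, and the free product case uses the same exact sequence $1\to F\to G_1*G_2\to G_1\times G_2\to 1$ with $F$ free, combined with \cite[Theorem 1.3]{Ver} for $F$ and the direct product case for the quotient. The only cosmetic difference is that you justify the freeness of the Cartesian kernel via the Kurosh subgroup theorem where the paper cites a reference, so no changes are needed.
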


The proof of Theorem \ref{Thm_MdAP_ext} relies on the fact that elements of $M_d(\Gamma)$ may be viewed as elements of $M_d(G)$ by extending them by $0$; see Lemma \ref{Lem_ext}. As a byproduct of this extension property, we obtain the following result, generalising \cite[Theorem 5.1]{Pis}.

\begin{prop}\label{Prop_free_sub}
Let $G$ be a discrete group containing a nonabelian free subgroup. Then, for every $d\geq 2$,
\begin{align*}
M_{d+1}(G)\subsetneq M_d(G).
\end{align*}
\end{prop}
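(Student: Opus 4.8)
The plan is to reduce the statement to the free-group case and then invoke (a strengthening of) \cite[Theorem~5.1]{Pis}, the bridge between the two being the extension-by-zero map of Lemma~\ref{Lem_ext}. First I would observe that it is enough to treat $G=\F_\infty$, the free group on countably many generators: any nonabelian free subgroup of $G$ already contains a copy of $\F_\infty$ (for instance $\langle b^{n}ab^{-n}\mid n\geq 0\rangle$ inside $\langle a,b\rangle$), so we may fix $H\leq G$ with $H\cong\F_\infty$. Assuming we have produced some $\psi\in M_d(H)\setminus M_{d+1}(H)$, Lemma~\ref{Lem_ext} gives that its extension by zero $\tilde\psi\colon G\to\C$ lies in $M_d(G)$. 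If $\tilde\psi$ also lay in $M_{d+1}(G)$, we could pick a decomposition as in \eqref{phi_xi} with maps $\xi_1,\dots,\xi_{d+1}$ and restrict each $\xi_i$ to $H$; writing $s\in H$ as $s\cdot e\cdots e$ (a product of $d+1$ elements of $H$) shows that these restrictions realise $\tilde\psi|_H$ as an element of $M_{d+1}(H)$ with $\|\tilde\psi|_H\|_{M_{d+1}(H)}\leq\|\tilde\psi\|_{M_{d+1}(G)}$. Since $\tilde\psi|_H=\psi$, this contradicts $\psi\notin M_{d+1}(H)$, so in fact $\tilde\psi\in M_d(G)\setminus M_{d+1}(G)$; together with the inclusion $M_{d+1}(G)\subseteq M_d(G)$ recalled in the introduction, this yields the claim.

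It remains to exhibit such a $\psi$ on $\F_\infty$, which is the heart of the matter and what I expect to be the main obstacle. Following Pisier, I would restrict attention to functions supported on the reduced words $g_{i_1}g_{i_2}\cdots g_{i_{d+1}}$ of $\F_\infty=\langle g_1,g_2,\dots\rangle$ (products of positive powers of generators, hence automatically reduced and indexed bijectively by tuples $(i_1,\dots,i_{d+1})$), identifying such a function with a $(d{+}1)$-array $a=(a_{i_1,\dots,i_{d+1}})$. Evaluating \eqref{phi_xi} on these words and distributing their $d+1$ letters among $d+1$, respectively $d$, consecutive group-element slots, one relates $\|\varphi_a\|_{M_{d+1}(\F_\infty)}$ to the factorization norm of $a$ as a $(d{+}1)$-linear form through $d$ intermediate Hilbert spaces, and $\|\varphi_a\|_{M_d(\F_\infty)}$ to the weaker factorization using only $d-1$ intermediate Hilbert spaces (one of the $d$ factors being allowed to depend on a pair of indices). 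The strict containment then follows from an array --- or a sequence of finite arrays with uniformly bounded $M_d$-norms and $M_{d+1}$-norms tending to infinity, which one amalgamates into a single function on $\F_\infty$ over pairwise disjoint blocks of generators --- that admits the first kind of factorization but not the second.

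To summarise the division of labour: the reduction in the first paragraph is soft and uses only that extension by zero carries $M_d(H)$ into $M_d(G)$ and that restriction to a subgroup is norm-decreasing on $M_{d+1}$, so no control on constants is needed there; all the difficulty is concentrated in the free-group estimate of the second paragraph, namely producing a multilinear Schur-type witness that separates the $M_d$- and $M_{d+1}$-norms and estimating both norms via multilinear factorization.
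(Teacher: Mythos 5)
Your proof follows essentially the same route as the paper: reduce to a copy of $\F_\infty$ inside $G$, extend a witness $\psi\in M_d(\F_\infty)\setminus M_{d+1}(\F_\infty)$ by zero via Lemma~\ref{Lem_ext}, and use that restriction to a subgroup is contractive on $M_{d+1}$ to conclude $\tilde\psi\notin M_{d+1}(G)$. The only difference is that the paper simply cites \cite[Theorem~5.1]{Pis} for the existence of $\psi$ (no strengthening is needed), so the multilinear-factorization sketch in your second paragraph, while in the spirit of Pisier's argument, is not something you need to carry out.
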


\begin{rmk}
It would be very interesting to determine whether Proposition \ref{Prop_free_sub} can be generalised to the setting of random embeddings; see \cite[\S 3]{Pis3} for a precise definition. The main motivation for studying this question is that, as a consequence of the celebrated Gaboriau--Lyons theorem \cite{GabLyo}, an infinite group $G$ is amenable if and only if the free group $\F_2$ cannot be realised as a ``random subgroup'' of $G$; see \cite[Corollary 12]{Pis3}. An analogous result to Proposition \ref{Prop_free_sub} in this setting would completely settle the Dixmier problem. Indeed, by \cite[Theorem 2.9]{Pis}, for every unitarisable group $G$, there exists $d_0\geq 2$ such that $M_d(G)=M_{d_0}(G)$ for all $d\geq d_0$.
\end{rmk}

Continuing our search for examples, we turn to the notion of $M_d$-weak amenability. We say that a locally compact group $G$ is $M_d$-weakly amenable if there is $C\geq 1$ such that the constant function $1$ is in the $\sigma(M_d(G),X_d(G))$-closure of the set
\begin{align*}
\left\{\varphi\in C_c(G)\ \mid\ \|\varphi\|_{M_d(G)}\leq C\right\}
\end{align*}
in $M_d(G)$. We define $\boldsymbol\Lambda(G,d)$ as the infimum of all $C\geq 1$ such that the condition above holds. For $d=2$, this property is exactly weak amenability, as defined by Cowling and Haagerup \cite{CowHaa}, and $\boldsymbol\Lambda(G,2)$ is the Cowling--Haagerup constant $\boldsymbol\Lambda(G)$. It can be seen from the definition that every $M_d$-weakly amenable group satisfies $M_d$-AP. Moreover, since the inclusion $M_{d+1}(G)\hookrightarrow M_d(G)$ is contractive, we always have
\begin{align*}
	\boldsymbol\Lambda(G,d)\leq\boldsymbol\Lambda(G,d+1).
\end{align*}
For convenience, when $G$ is not $M_d$-weakly amenable, we simply set $\boldsymbol\Lambda(G,d)=\infty$.

The first concrete examples that we analyse are Baumslag--Solitar groups, which are defined by the following presentation. For $m,n\in\Z\setminus\{0\}$,
\begin{align*}
	\operatorname{BS}(m,n)=\big\langle a,b\ \mid\ a^n=ba^mb^{-1}\big\rangle.
\end{align*}
It was shown in \cite{GalJan} that $\operatorname{BS}(m,n)$ can be realised as a closed subgroup of a locally compact group of the form $\big(\Z\ltimes\R\big)\times\operatorname{Aut}(T)$, where $\operatorname{Aut}(T)$ is the automorphism group of a locally finite tree. As a consequence, $\operatorname{BS}(m,n)$ has the Haagerup property. The same argument shows that $\operatorname{BS}(m,n)$ is weakly amenable with $\boldsymbol\Lambda(\operatorname{BS}(m,n))=1$; see \cite{CorVal}. Here, we strengthen this fact as follows.

\begin{thm}\label{Thm_BS}
Let $d\geq 2$, and $m,n\in\Z\setminus\{0\}$. Then $\operatorname{BS}(m,n)$ is $M_d$-weakly amenable with $\boldsymbol\Lambda(\operatorname{BS}(m,n),d)=1$.
\end{thm}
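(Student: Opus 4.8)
The plan is to exploit the realisation of $\operatorname{BS}(m,n)$ as a closed subgroup of $L=(\Z\ltimes\R)\times\operatorname{Aut}(T)$ provided by \cite{GalJan}, where $T$ is a locally finite tree, together with three properties of the constants $\boldsymbol\Lambda(\cdot,d)$: monotonicity under passing to closed subgroups, submultiplicativity under direct products, and the value $\boldsymbol\Lambda(G,d)=1$ for amenable $G$. Granting these, and using that $\Z\ltimes\R$ is amenable, one obtains
\begin{align*}
\boldsymbol\Lambda(\operatorname{BS}(m,n),d)\leq\boldsymbol\Lambda(L,d)\leq\boldsymbol\Lambda(\Z\ltimes\R,d)\,\boldsymbol\Lambda(\operatorname{Aut}(T),d)=\boldsymbol\Lambda(\operatorname{Aut}(T),d),
\end{align*}
and since $\boldsymbol\Lambda(\operatorname{BS}(m,n),d)\geq 1$ automatically, the whole theorem comes down to the single estimate $\boldsymbol\Lambda(\operatorname{Aut}(T),d)\leq 1$.

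The three auxiliary properties are soft. The basic observation is that if $\varphi(\cdot)=\langle\pi(\cdot)\xi,\eta\rangle$ is a coefficient of a unitary representation $\pi$ on $\mathcal H_\pi$, then taking $\mathcal H_1=\cdots=\mathcal H_{d-1}=\mathcal H_\pi$, $\xi_d(t)1=\pi(t)\xi$, $\xi_i(t)=\pi(t)$ for $2\leq i\leq d-1$, and $\xi_1(t)v=\langle\pi(t)v,\eta\rangle$ realises \eqref{phi_xi} and yields $\|\varphi\|_{M_d(G)}\leq\|\xi\|\,\|\eta\|$; in particular every normalised positive-definite function has $M_d$-norm $1$. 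Combined with the fact that an amenable group carries a net of compactly supported, normalised, positive-definite functions tending to $1$ uniformly on compact sets, and with the elementary fact that a net bounded in $M_d(G)$ which converges uniformly on compact sets converges for $\sigma(M_d(G),X_d(G))$, this gives $\boldsymbol\Lambda(G,d)=1$ for amenable $G$. For a direct product, tensoring a decomposition \eqref{phi_xi} of $\varphi_1\in M_d(G_1)$ with one of $\varphi_2\in M_d(G_2)$ shows $\|\varphi_1\otimes\varphi_2\|_{M_d(G_1\times G_2)}\leq\|\varphi_1\|_{M_d(G_1)}\|\varphi_2\|_{M_d(G_2)}$, and, $L^1(G_1)\otimes L^1(G_2)$ being dense in $L^1(G_1\times G_2)$, products of bounded approximating nets stay $\sigma(M_d,X_d)$-convergent; hence $\boldsymbol\Lambda(G_1\times G_2,d)\leq\boldsymbol\Lambda(G_1,d)\,\boldsymbol\Lambda(G_2,d)$. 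Finally, for $H\leq G$ closed, restricting the maps $\xi_i$ to $H$ makes the restriction $M_d(G)\to M_d(H)$ contractive and sends compactly supported functions to compactly supported functions; moreover, since the inclusion $M_d(G)\hookrightarrow M_2(G)$ is contractive and weak*-weak*-continuous, a bounded $\sigma(M_d(G),X_d(G))$-convergent net converges uniformly on compact sets (a classical feature of Herz--Schur multipliers), hence so does its restriction to $H$, which therefore converges for $\sigma(M_d(H),X_d(H))$; this gives $\boldsymbol\Lambda(H,d)\leq\boldsymbol\Lambda(G,d)$.

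To prove $\boldsymbol\Lambda(\operatorname{Aut}(T),d)\leq 1$, fix a base vertex $v_0$; its stabiliser $K$ is a compact open subgroup because $T$ is locally finite. The length function $\psi(g)=d(v_0,gv_0)$ is conditionally negative definite: it comes from the map $g\mapsto\mathbf 1_{[v_0,gv_0]}\in\ell^2(E(T))$, whose symmetric differences compute distances in the tree. Hence, by Schoenberg's theorem, $\phi_t=t^{\psi}$ is a normalised positive-definite function for each $0<t<1$, so $\|\phi_t\|_{M_d}=1$. Write $\chi_n$ for the indicator of $\{g:\psi(g)=n\}$; this set is a finite union of cosets of $K$, so $\chi_n\in C_c(\operatorname{Aut}(T))$, and $\phi_t=\sum_{n\geq 0}t^n\chi_n$ with truncations $\phi_t^{(N)}=\sum_{n=0}^{N}t^n\chi_n\in C_c(\operatorname{Aut}(T))$. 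Granting a polynomial bound $\|\chi_n\|_{M_d}\leq P_d(n)$, one gets $\|\phi_t-\phi_t^{(N)}\|_{M_d}\leq\sum_{n>N}t^n P_d(n)\to 0$ as $N\to\infty$; so for each $\varepsilon>0$ there is $N$ with $\|\phi_t^{(N)}\|_{M_d}<1+\varepsilon$, and letting $t\to 1$ with $N=N(t)\to\infty$ suitably, the functions $\phi_t^{(N)}$ converge to $1$ uniformly on compact sets (as $\psi$ is bounded there), hence for $\sigma(M_d,X_d)$. This yields $\boldsymbol\Lambda(\operatorname{Aut}(T),d)\leq 1$, finishing the proof modulo the bound on $\|\chi_n\|_{M_d}$.

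That bound is the step I expect to be the main obstacle. For $d=2$ it is the classical estimate $\|\chi_n\|_{M_2}\leq 2n+1$ due to Haagerup. For general $d$ the natural strategy is to expand $\chi_n(t_1\cdots t_d)$ according to the combinatorics of the geodesic from $v_0$ to $t_1\cdots t_dv_0$ relative to the intermediate vertices $v_0,\,t_1v_0,\,t_1t_2v_0,\dots,\,t_1\cdots t_{d-1}v_0$: in a tree the relevant medians and Gromov products take only $O(n)$ values, and each combinatorial type should contribute a single summand $\xi_1(t_1)\cdots\xi_d(t_d)1$ with all $\xi_i$ of norm at most $1$, so that summing over the $O(n^{d-1})$ types gives $\|\chi_n\|_{M_d}\leq C_d\,n^{d-1}$. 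Turning this heuristic into an exact decomposition with genuinely norm-controlled building blocks is the technical core of the argument; once that is in hand, everything else assembles as above.
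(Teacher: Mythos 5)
Your global architecture coincides with the paper's: embed $\operatorname{BS}(m,n)$ as a closed subgroup of $(\Z\ltimes\R)\times\operatorname{Aut}(T)$ via \cite{GalJan}, use that $\boldsymbol\Lambda(\cdot,d)=1$ for amenable groups, submultiplicativity under direct products (Lemma \ref{Lem_Mdwa_prod}), and monotonicity under passing to closed subgroups (\cite[Corollary 0.6]{Bat2}); these reductions are all correct. The divergence, and the problem, is in the one step you yourself identify as the core: the claim $\boldsymbol\Lambda(\operatorname{Aut}(T),d)\leq 1$. Your route through the positive-definite functions $\phi_t=t^{\psi}$ and their radial truncations $\phi_t^{(N)}=\sum_{n\leq N}t^n\chi_n$ requires a polynomial bound $\|\chi_n\|_{M_d(\operatorname{Aut}(T))}\leq P_d(n)$, and you offer only a heuristic for it. This is a genuine gap, not a routine verification: for $d=2$ one has Haagerup's estimate, but the $M_d$-analogue for $d\geq 3$ is not available off the shelf, and the combinatorial decomposition you sketch (indexing by medians/Gromov products of the intermediate orbit points $t_1\cdots t_iv_0$ and hoping each type contributes a norm-one summand) is exactly the kind of multi-variable factorisation that is delicate for $M_d$; nothing in your sketch controls the operator norms of the building blocks. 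A secondary, smaller issue: in your closed-subgroup argument you assert that a bounded $\sigma(M_d(G),X_d(G))$-convergent net automatically converges uniformly on compact sets; that is not what Proposition \ref{Prop_Mdwa_ucc} says (the equivalence is at the level of the property, obtained by an averaging argument, not net by net). The monotonicity itself is fine --- restrict the net furnished by Proposition \ref{Prop_Mdwa_ucc}(iii) --- but the justification as written is off.

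The paper avoids the sphere bound entirely. It invokes Szwarc's analytic family of uniformly bounded representations $(\pi_z)_{z\in\D}$ on $\ell^2(T)$ with $\langle\pi_z(g)\delta_x,\delta_x\rangle=z^{d(gx,x)}$, $|\pi_z|\leq 2|1-z^2|/(1-|z|)$, and $\pi_r$ unitary for real $r$ (Corollary \ref{Cor_Aut(T)}); then Proposition \ref{Prop_an_ub_rep} truncates by averaging $\psi_{re^{\ii\theta}}$ against the F\'ejer kernel in $\theta$. Analyticity makes $z\mapsto\psi_z\in M_d(G)$ holomorphic, so the F\'ejer averages $\Phi_{N,r}$ converge to $\psi_r$ in $M_d$-norm --- and $\|\psi_r\|_{M_d}\leq 1$ because $\pi_r$ is unitary --- while the Fourier truncation in $\theta$ kills all $s$ with $l(s)>N$, yielding compact support. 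The norm control thus comes from the single coefficient $\psi_r$, never from individual spheres. It is worth noting that the polynomial bound you want actually \emph{follows} from this machinery: extracting the $n$-th Fourier coefficient gives $\chi_n=r^{-n}\cdot\frac{1}{2\pi}\int_0^{2\pi}e^{-\ii n\theta}\psi_{re^{\ii\theta}}\,d\theta$, whence $\|\chi_n\|_{M_d}\leq r^{-n}\sup_\theta|\pi_{re^{\ii\theta}}|^d\lesssim n^d$ upon taking $r=1-1/n$. So your outline is salvageable, but only by importing the very ingredient (uniformly bounded analytic representations) that the paper uses directly; as it stands, the proposal does not constitute a proof.
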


In order to prove Theorem \ref{Thm_BS}, we need to show that $\boldsymbol\Lambda(\operatorname{Aut}(T),d)=1$, and that the constant $\boldsymbol\Lambda(\,\cdot\,,d)$ is submultiplicative; see Corollary \ref{Cor_Aut(T)} and Lemma \ref{Lem_Mdwa_prod}. Then we can use the embedding $\operatorname{BS}(m,n)\hookrightarrow\big(\Z\ltimes\R\big)\times\operatorname{Aut}(T)$ given by \cite{GalJan}.

Lastly, we focus on Lie groups. For a simple Lie group $G$, weak amenability is characterised by its real rank; see Section \ref{S_Lie_gr} for the definition of $\operatorname{rank}_\R G$. More precisely, $G$ is weakly amenable if and only if $\operatorname{rank}_\R G$ is $0$ or $1$; see e.g. \cite[\S 5]{Ver3}. Moreover, the exact value of the Cowling--Haagerup constant $\boldsymbol\Lambda(G)$ depends only on the local isomorphism class of $G$. In \cite{CowHaa}, Cowling and Haagerup proved that $\boldsymbol\Lambda(\operatorname{Sp}(n,1))=2n-1$ and $\boldsymbol\Lambda(\operatorname{F}_{4,-20})=21$, providing the first examples of groups for which $\boldsymbol\Lambda(G)$ is strictly between $1$ and $\infty$. A very important consequence of this result is the fact that two lattices $\Gamma<\operatorname{Sp}(n,1)$, $\Lambda<\operatorname{Sp}(m,1)$ cannot have isomorphic von Neumann algebras if $n\neq m$. For $M_d$-weak amenability, we prove the following.

\begin{thm}\label{Thm_Lie_gr}
Let $G$ be a simple Lie group with finite centre, and let $d\geq 2$. Then $G$ is $M_d$-weakly amenable if and only if it has real rank $0$ or $1$. Moreover,
\begin{align*}
&\boldsymbol\Lambda(G,d)=1  & &\text{if }\ \operatorname{rank}_\R G=0,\\
&\boldsymbol\Lambda(G,d)=1  & &\text{if }\ G\approx\operatorname{SO}(n,1),\ n\geq 2,\\
&\boldsymbol\Lambda(G,d)=1  & &\text{if }\ G\approx\operatorname{SU}(n,1),\ n\geq 2,\\
2n-1 \leq\ &\boldsymbol\Lambda(G,d) \leq (2n-1)^d  & &\text{if }\ G\approx\operatorname{Sp}(n,1),\ n\geq 2,\\
21 \leq\ &\boldsymbol\Lambda(G,d) \leq (21)^d & &\text{if }\ G\approx\operatorname{F}_{4,-20}.
\end{align*}
\end{thm}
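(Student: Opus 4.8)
The plan is to reduce everything to the known results about weak amenability ($d=2$) together with two structural facts: the submultiplicativity of $\boldsymbol\Lambda(\,\cdot\,,d)$ (Lemma \ref{Lem_Mdwa_prod}), and the monotonicity $\boldsymbol\Lambda(G,2)\leq\boldsymbol\Lambda(G,d)$ already noted in the introduction. First, for the negative direction: if $\operatorname{rank}_\R G\geq 2$, then $G$ is not weakly amenable, i.e. $\boldsymbol\Lambda(G,2)=\infty$, so by monotonicity $\boldsymbol\Lambda(G,d)=\infty$ for all $d\geq 2$, and $G$ is not $M_d$-weakly amenable. This handles the ``only if'' part. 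Conversely, for $\operatorname{rank}_\R G\leq 1$ I need to produce, for each $d$, a net in $C_c(G)$ bounded in $M_d(G)$ and converging to $1$ in the relevant weak* topology.

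The key idea is that a Herz--Schur multiplier ($M_2$-multiplier) of small norm gives rise to an $M_d$-multiplier of controlled norm by ``iterating the factorisation''. Concretely, if $\varphi\in M_2(G)$ with $\|\varphi\|_{M_2}\leq C$, then $\varphi$ admits a Gschwendtner/Jolissaint-type factorisation $\varphi(st)=\langle Q(t)\mid P(s)\rangle$ with $\sup\|P(s)\|\sup\|Q(t)\|\leq C$, and one should be able to show that $\varphi^{\cdot}$ — or rather a suitable expression built from $\varphi$ — lies in $M_d(G)$ with norm at most $C^{d-1}$ (roughly: compose $d-1$ copies of the same $2$-step factorisation, inserting identity operators so that the middle Hilbert spaces match up, exploiting $\varphi(t_1\cdots t_d)=\varphi((t_1\cdots t_{d-1})t_d)$ and induction). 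Actually the clean statement to isolate and prove as a preliminary lemma is: \emph{for any locally compact group $H$, $\boldsymbol\Lambda(H,d)\leq\boldsymbol\Lambda(H,2)^{\,d-1}$}, which itself follows from the general principle that $M_2(H)\subseteq M_d(H)$ contractively on the nose when the $M_2$-norm is $1$, and more generally $\|\varphi\|_{M_d}\leq\|\varphi\|_{M_2}^{d-1}$ for $\varphi\in M_2(H)$; this last inequality is the technical heart. Granting it, and granting that the approximating net for weak amenability can be taken in $C_c(G)$ (which is standard for Lie groups), the net $(\varphi_j)$ witnessing $\boldsymbol\Lambda(G,2)\leq\boldsymbol\Lambda$ gives $\|\varphi_j\|_{M_d}\leq\boldsymbol\Lambda^{d-1}$, and since $\varphi_j\to1$ uniformly on compacta it converges in $\sigma(M_d(G),X_d(G))$; hence $\boldsymbol\Lambda(G,d)\leq\boldsymbol\Lambda(G,2)^{d-1}$.

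This already gives the upper bounds $\boldsymbol\Lambda(\operatorname{Sp}(n,1),d)\leq(2n-1)^{d-1}$ and $\boldsymbol\Lambda(\operatorname{F}_{4,-20},d)\leq21^{d-1}$, which are slightly \emph{better} than what the theorem claims ($(2n-1)^d$ and $21^d$); presumably the author's route is a touch lossier, so I will simply state $(2n-1)^d$ as in the theorem, since $(2n-1)^{d-1}\leq(2n-1)^d$. For the three cases where the claimed value is exactly $1$ — $\operatorname{rank}_\R G=0$ (i.e. $G$ compact), $G\approx\operatorname{SO}(n,1)$, $G\approx\operatorname{SU}(n,1)$ — we use that $\boldsymbol\Lambda(G,2)=1$ in each case (compact groups trivially; $\operatorname{SO}(n,1)$ and $\operatorname{SU}(n,1)$ by Cowling--Haagerup, with the approximating multipliers being radial functions of $M_2$-norm tending to $1$, and since the value depends only on the local isomorphism class we may pass to any $G$ locally isomorphic to these). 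Then $\boldsymbol\Lambda(G,d)\leq1^{d-1}=1$, and since always $\boldsymbol\Lambda(G,d)\geq1$ by definition, equality holds. The lower bounds $\boldsymbol\Lambda(\operatorname{Sp}(n,1),d)\geq2n-1$ and $\boldsymbol\Lambda(\operatorname{F}_{4,-20},d)\geq21$ follow immediately from $\boldsymbol\Lambda(G,d)\geq\boldsymbol\Lambda(G,2)$ and the Cowling--Haagerup computation.

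The main obstacle is the preliminary inequality $\|\varphi\|_{M_d(H)}\leq\|\varphi\|_{M_2(H)}^{d-1}$ for Herz--Schur multipliers — making the ``iterated factorisation'' rigorous requires care with the tensoring of Hilbert spaces and the bookkeeping of which variable is grouped with which, and one must check that the resulting maps $\xi_i:H\to\mathbf{B}(\mathcal H_i,\mathcal H_{i-1})$ genuinely satisfy the boundary conditions $\mathcal H_0=\mathcal H_d=\C$ and reproduce $\varphi(t_1\cdots t_d)$. A secondary subtlety is ensuring the approximating functions for weak amenability of $\operatorname{SO}(n,1)$, $\operatorname{SU}(n,1)$, $\operatorname{Sp}(n,1)$, $\operatorname{F}_{4,-20}$ can be chosen continuous and \emph{compactly supported} (rather than merely in $A(G)$ or vanishing at infinity); this is known but should be cited carefully, e.g. from the standard references on weak amenability of rank one groups. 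Everything else is a direct assembly of these ingredients with the monotonicity and submultiplicativity already available in the paper.
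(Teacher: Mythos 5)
Your reduction of the negative direction and the lower bounds to the $d=2$ case via monotonicity is fine, and so is the treatment of the compact case. But the ``technical heart'' of your argument --- the claimed inequality $\|\varphi\|_{M_d(H)}\leq\|\varphi\|_{M_2(H)}^{d-1}$ for every Herz--Schur multiplier $\varphi$ on any locally compact group $H$ --- is false, and in fact it is contradicted by this very paper. Such an inequality would give the inclusion $M_2(H)\subseteq M_d(H)$, hence $M_2(H)=M_d(H)$ for all $d$; but Pisier's theorem (\cite[Theorem 5.1]{Pis}) gives $M_{d+1}(\F_\infty)\subsetneq M_d(\F_\infty)$, and Proposition \ref{Prop_free_sub} extends this strict inclusion to every discrete group containing a nonabelian free subgroup. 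The reason the ``iterated factorisation'' cannot be made rigorous is structural: from $\varphi(st)=\langle Q(t),P(s)\rangle$ and the grouping $\varphi(t_1\cdots t_d)=\varphi((t_1\cdots t_{d-1})t_d)$ you obtain the factor $P(t_1\cdots t_{d-1})$, which depends jointly on $t_1,\dots,t_{d-1}$; an $M_d$-decomposition requires each factor $\xi_i$ to depend on $t_i$ alone, and nothing in the definition of an $M_2$-multiplier lets you split $P$ multiplicatively. So your upper bounds $(2n-1)^{d-1}$ and $21^{d-1}$ are not established, and indeed would be \emph{better} than what the theorem claims precisely because the route you propose does not exist.

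The fix is to replace ``bounded in $M_2$-norm'' by the stronger input ``coefficient of a uniformly bounded representation''. If $\varphi(s)=\langle\pi(s)\xi,\eta\rangle$ with $|\pi|\leq\theta$, then the multiplicativity $\pi(t_1\cdots t_d)=\pi(t_1)\cdots\pi(t_d)$ makes the iteration legitimate and yields $\|\varphi\|_{M_d(G)}\leq\theta^d\|\varphi\|_{B_\theta(G)}$ (Lemma \ref{Lem_Btheta_Md}); the exponent $d$, not $d-1$, is what produces the stated bounds $(2n-1)^d$ and $21^d$. The nontrivial analytic input is then that for the four rank-one model groups and any $\theta>\boldsymbol\Lambda(G)$ there is an approximate identity in $C_c(G)$ bounded in the $B_\theta(G)$-norm, which comes from the uniformly bounded representations of \cite{Doo} as packaged in \cite{Ver2}; this is genuinely stronger than weak amenability and cannot be deduced from the Cowling--Haagerup theorem alone. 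Finally, your passage ``the value depends only on the local isomorphism class'' is itself something to be proved for $\boldsymbol\Lambda(\,\cdot\,,d)$: the paper does it by quotienting by the finite centre and invoking invariance under compact normal quotients (Lemma \ref{Lem_Mdwa_G/K}), a step you should not omit.
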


It was shown in \cite[Theorem 0.7]{Bat2} that, if $\Gamma$ is a lattice in $G$, then $\boldsymbol\Lambda(\Gamma,d)=\boldsymbol\Lambda(G,d)$ for all $d\geq 2$. Therefore, Theorem \ref{Thm_Lie_gr} also applies to lattices. Moreover, for discrete groups, the constants $\boldsymbol\Lambda(\Gamma,d)$ are invariant under von Neumann equivalence; see \cite[Theorem 1.1]{Bat}. This implies that $\boldsymbol\Lambda(\Gamma,d)=\boldsymbol\Lambda(\Lambda,d)$ whenever $\Gamma$ and $\Lambda$ have isomorphic von Neumann algebras. This gives a new tool for distinguishing between group von Neumann algebras; however, it is still not clear whether $M_d$-weak amenability is really different to ($M_2$-)weak amenability. More precisely, we do not know if it is possible to have
\begin{align*}
	\boldsymbol\Lambda(G,d)<\boldsymbol\Lambda(G,d+1)
\end{align*}
for some $d\geq 2$.

Let us also mention that lattices in rank $1$ simple Lie groups are hyperbolic. A natural question is whether the result above can be extended to all hyperbolic groups.

\begin{ques}
	Are hyperbolic groups $M_d$-weakly amenable for all $d\geq 2$?
\end{ques}

For $d=2$, this question has a positive answer; see \cite{Oza}.

\begin{rmk}\label{Rmk_G=KS}
	The main tool in the proof of Theorem \ref{Thm_Lie_gr} is a family of approximate identities constructed in \cite{Ver2}, which in turn are given by a construction of uniformly bounded representations from \cite{Doo}. One could alternatively try to adapt the arguments in \cite{deCHaa} and \cite{CowHaa} with the goal of calculating the exact values of $\boldsymbol\Lambda(G,d)$. This was indeed our first attempt. Everything seems to work with minor modifications, except for \cite[Proposition 1.6(ii)]{CowHaa}, which relates coefficients of unitary representations on $S$ to elements of $M_2(G)$ when $G=KS$ for $K$ compact and $S$ amenable. It is not clear whether this result can be extended to $M_d(G)$.
\end{rmk}

This paper is organised as follows. In Section \ref{S_ext_mult}, we prove an extension property for $M_d$-multipliers, together with Proposition \ref{Prop_free_sub}. In Section \ref{S_MdAP_ext}, we focus on the stability of $M_d$-AP and prove Theorem \ref{Thm_MdAP_ext}. Section \ref{S_Mdwa} is devoted to $M_d$-weak amenability and various general results that will be needed later. In Section \ref{S_BS_gr}, we discuss Baumslag--Solitar groups and the proof of Theorem \ref{Thm_BS}. Finally, in Section \ref{S_Lie_gr}, we focus on Lie groups and the proof of Theorem \ref{Thm_Lie_gr}.

\section{Extending multipliers from a subgroup}\label{S_ext_mult}
The goal of this section is to show that, when $G$ is a discrete group and $\Gamma$ is a subgroup of $G$, elements of $M_d(\Gamma)$ may be viewed as elements of $M_d(G)$ by extending them to $G\setminus\Gamma$ by $0$. This will be achieved through the use of a cocycle $\alpha:G\times G/\Gamma\to\Gamma$.

Let $q:G\to G/\Gamma$ be the quotient map. We say that $\sigma:G/\Gamma\to G$ is a lifting if $q\circ\sigma=\operatorname{id}_{G/\Gamma}$. We will also impose the condition $\sigma(q(e))=e$, where $e$ denotes the identity element of $G$. Fix such a lifting, and observe that
\begin{align*}
G=\bigsqcup_{x\in G/\Gamma}\sigma(x)\Gamma.
\end{align*}
Hence, for all $s\in G$ and $x\in G/\Gamma$, there is a unique element $\alpha(s,x)\in\Gamma$ such that
\begin{align*}
s\sigma(x)=\sigma(q(s\sigma(x)))\alpha(s,x).
\end{align*}
Observe that
\begin{align*}
\sigma(q(s\sigma(x)))=\sigma(sq(\sigma(x)))=\sigma(sx),
\end{align*}
where $sx$ is given by the action by left multiplication of $G$ on $G/\Gamma$. Therefore we can define $\alpha:G\times G/\Gamma\to\Gamma$ by
\begin{align}\label{def_cocy}
\alpha(s,x)=\sigma(sx)^{-1}s\sigma(x)
\end{align}
for all $s\in G$ and $x\in G/\Gamma$. It readily follows that $\alpha$ satisfies the cocycle identity:
\begin{align}\label{cocy_id}
\alpha(st,x)=\alpha(s,tx)\alpha(t,x)
\end{align}
for all $s,t\in G$ and $x\in G/\Gamma$. This cocycle will allow us to prove the extension property that we are after. Let $\C[G]$ denote the group algebra of $G$. For $f\in\C[G]$, we denote by $f|_\Gamma$ the restriction of $f$ to $\Gamma$.

\begin{lem}\label{Lem_ext}
Let $G$ be a discrete group, $\Gamma$ a subgroup of $G$, and $d\geq 2$. The linear map
\begin{align*}
f\in\C[G]\mapsto f|_\Gamma\in\C[\Gamma]
\end{align*} extends to a bounded map $\Upsilon:X_d(G)\mapsto X_d(\Gamma)$ of norm $1$. Its dual map $\Upsilon^*:M_d(\Gamma)\to M_d(G)$ is given by
\begin{align}\label{Upsilon*}
\Upsilon^*(\varphi)(s)=\begin{cases}
\varphi(s), & s\in\Gamma,\\
0,& \text{otherwise},
\end{cases}
\end{align}
for all $\varphi\in M_d(\Gamma)$.
\end{lem}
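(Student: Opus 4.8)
The plan is to prove the lemma in two stages: first establish that the restriction map $f\mapsto f|_\Gamma$ is norm-decreasing from $X_d(G)$ to $X_d(\Gamma)$ (so it extends to a contraction $\Upsilon$), and then identify its adjoint. Since $X_d(\cdot)$ is defined as a completion, it suffices to check the norm bound on $\C[G]$ (viewed inside $L^1(G)$ with counting measure). Fix $f\in\C[G]$. By definition of $\|\cdot\|_{X_d(\Gamma)}$, we need to bound $\left|\sum_{s\in\Gamma}\varphi(s)f(s)\right|$ uniformly over $\varphi\in M_d(\Gamma)$ with $\|\varphi\|_{M_d(\Gamma)}\leq 1$. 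The natural move is to show that every such $\varphi$ extends to $\tilde\varphi\in M_d(G)$ with $\|\tilde\varphi\|_{M_d(G)}\leq 1$ and $\tilde\varphi$ vanishing off $\Gamma$, namely the function in \eqref{Upsilon*}. Granting this, $\left|\sum_{s\in\Gamma}\varphi(s)f(s)\right|=\left|\sum_{s\in G}\tilde\varphi(s)f(s)\right|\leq\|f\|_{X_d(G)}$, which gives $\|f|_\Gamma\|_{X_d(\Gamma)}\leq\|f\|_{X_d(G)}$; taking $f=\delta_e$ shows the norm is exactly $1$. Then $\Upsilon$ is the promised contraction, and its dual $\Upsilon^*:M_d(\Gamma)\to M_d(G)$ is automatically weak*-weak*-continuous and contractive; the duality pairing computation $\langle\Upsilon^*\varphi,f\rangle=\langle\varphi,\Upsilon f\rangle=\sum_{s\in\Gamma}\varphi(s)f(s)$ for $f\in\C[G]$ identifies $\Upsilon^*\varphi$ with the function in \eqref{Upsilon*} (the group algebra being weak*-dense, or at least norming, in $X_d(G)$).

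The heart of the matter is therefore the extension claim: if $\varphi\in M_d(\Gamma)$ has a decomposition $\varphi(t_1\cdots t_d)=\eta_1(t_1)\cdots\eta_d(t_d)1$ with $\eta_i:\Gamma\to\mathbf{B}(\mathcal{K}_i,\mathcal{K}_{i-1})$, then the zero-extension $\tilde\varphi$ lies in $M_d(G)$ with $\|\tilde\varphi\|_{M_d(G)}\leq\prod_i\sup_{t_i\in\Gamma}\|\eta_i(t_i)\|$. This is where the cocycle $\alpha$ constructed before the lemma enters. The idea is to build new Hilbert spaces by amplifying with $\ell^2(G/\Gamma)$, setting $\mathcal{H}_i=\ell^2(G/\Gamma)\otimes\mathcal{K}_i$ for $1\leq i\leq d-1$ and keeping $\mathcal{H}_0=\mathcal{H}_d=\C$, and to define, for $s\in G$,
\begin{align*}
\xi_i(s)=\sum_{x\in G/\Gamma}e_{sx,x}\otimes\eta_i(\alpha(s,x)),\qquad 1<i<d,
\end{align*}
where $e_{y,x}$ denotes the rank-one partial isometry $\delta_x\mapsto\delta_y$ on $\ell^2(G/\Gamma)$, with the boundary maps $\xi_1(s)$ acting $\mathcal{H}_1\to\mathcal{H}_0=\C$ by $\delta_x\otimes v\mapsto \mathds{1}_{x=q(e)}\,\eta_1(\alpha(s,x))v$ and, symmetrically, $\xi_d(s):\C\to\mathcal{H}_{d-1}$ by $v\mapsto \delta_{sq(e)}\otimes\eta_d(\alpha(s,q(e)))v$ — with the precise placement of the $\ell^2(G/\Gamma)$ indices dictated by making the telescoping work. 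Each $\xi_i(s)$ is block-diagonal (a direct sum over $x$ up to the shuffle $x\mapsto sx$), so $\|\xi_i(s)\|=\sup_x\|\eta_i(\alpha(s,x))\|\leq\sup_{t\in\Gamma}\|\eta_i(t)\|$, giving the norm bound. The factorisation identity \eqref{phi_xi} for $\tilde\varphi$ then reduces, upon multiplying $\xi_1(s_1)\cdots\xi_d(s_d)$, to a telescoping product of the partial isometries together with the cocycle identity \eqref{cocy_id}: the matrix-unit bookkeeping forces all intermediate cosets to collapse to $q(e)$ unless $s_1\cdots s_d\in\Gamma$, in which case the surviving $\eta_i$-arguments compose via \eqref{cocy_id} to $\eta_1(t_1)\cdots\eta_d(t_d)$ with $t_i$ the appropriate group elements whose product is $s_1\cdots s_d$ — and when $s_1\cdots s_d\notin\Gamma$ everything is killed, yielding $0$ as required.

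The main obstacle I anticipate is getting the combinatorics of the matrix units and the cocycle exactly right so that the product $\xi_1(s_1)\cdots\xi_d(s_d)$ genuinely telescopes to $\tilde\varphi(s_1\cdots s_d)$: one has to choose the coset-indexing convention consistently across all $d$ factors, verify that the "leftover" coset after processing $s_1,\dots,s_k$ is $q(\sigma((s_1\cdots s_k)^{-1})^{-1})$ or equivalently tracks $s_k s_{k-1}\cdots$ acting on $q(e)$, and confirm via \eqref{cocy_id} that the accumulated product $\alpha(s_1,\cdot)\alpha(s_2,\cdot)\cdots$ telescopes to a single element of $\Gamma$ equal to $s_1\cdots s_d$ precisely when that product lands in $\Gamma$. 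Once the $d=2$ case (essentially \cite[Theorem 0.3]{Bat2}-style Herz--Schur reasoning, and classical for groups) is written cleanly, the general $d$ is the same argument with $d-1$ layers of amplification, but care is needed at the two boundary factors $\xi_1$ and $\xi_d$ where one leg of the tensor product is $\C$. I would also double-check the contractivity of $\Upsilon$ at $f=\delta_e$ to pin the norm at exactly $1$ rather than merely $\leq 1$, and note that boundedness/continuity of the $\xi_i$ as maps $G\to\mathbf{B}(\cdot,\cdot)$ is automatic here since $G$ is discrete.
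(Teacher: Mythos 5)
Your proposal follows the paper's proof essentially step for step: reduce the $X_d$-norm bound to the claim that the zero-extension $\tilde\varphi$ of $\varphi\in M_d(\Gamma)$ lies in $M_d(G)$ with no increase of norm, prove that claim by amplifying the intermediate Hilbert spaces with $\ell^2(G/\Gamma)$ and twisting by the cocycle $\alpha$, get the norm bounds from block-diagonality, and get the factorisation identity by telescoping via \eqref{cocy_id}; the identification of $\Upsilon^*$ through the pairing on $\C[G]$ and the evaluation at $\delta_e$ to pin the norm at exactly $1$ are also fine. The one place where your formulas, as written, would fail is the boundary map $\xi_1$: you set $\xi_1(s)(\delta_x\otimes v)=\mathds{1}_{x=q(e)}\,\eta_1(\alpha(s,x))v$, but the coset arriving at the first factor in the product $\xi_1(s_1)\cdots\xi_d(s_d)1$ is $q(s_2\cdots s_d)$, so your indicator tests $s_2\cdots s_d\in\Gamma$ rather than $s_1\cdots s_d\in\Gamma$; the resulting quantity is then not even a function of the product $s_1\cdots s_d$ alone (compare $(s_1,\ldots,s_d)=(g,e,\ldots,e)$ with $(e,g,e,\ldots,e)$ for $g\notin\Gamma$). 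The correct boundary map, which is what the paper uses, is $\xi_1(s)(\delta_x\otimes v)=\langle\delta_{sx},\delta_{q(e)}\rangle\,\eta_1(\alpha(s,x))v$, i.e.\ one tests the coset \emph{after} acting by $s$; with that fix the telescoping yields $\langle\delta_{q(s_1\cdots s_d)},\delta_{q(e)}\rangle\,\varphi(\alpha(s_1\cdots s_d,q(e)))$, which equals $\tilde\varphi(s_1\cdots s_d)$ because $\alpha(t,q(e))=t$ for $t\in\Gamma$. Since you explicitly flagged the index placement as the point to be checked, this is a correction to your bookkeeping rather than a missing idea, but as stated the formula does have to change.
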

\begin{proof}
We will first show that the formula \eqref{Upsilon*} gives a well defined contraction from $M_d(\Gamma)$ to $M_d(G)$, and then we will prove that it is the dual map of $\Upsilon$. Let $\varphi\in M_d(\Gamma)$ be given by
\begin{align*}
\varphi(s_1\cdots s_d)=\xi_1(s_1)\cdots\xi_d(s_d)
\end{align*}
for all $s_1,\ldots,s_d\in\Gamma$, where the maps $\xi_i:\Gamma\to\mathbf{B}(\mathcal{H}_i,\mathcal{H}_{i-1})$ are as in \eqref{phi_xi}. Let us define
\begin{align*}
	\tilde{\mathcal{H}}_0=\tilde{\mathcal{H}}_d=\C,
\end{align*}
and
\begin{align*}
\tilde{\mathcal{H}}_i=\ell^2(G/\Gamma)\otimes\mathcal{H}_i
\end{align*}
for all $i=1,\ldots,d-1$. Fix a lifting $\sigma:G/\Gamma\to G$ and a cocycle $\alpha:G\times G/\Gamma\to\Gamma$ as in \eqref{def_cocy}, and define $\tilde{\xi}_d:G\to\mathbf{B}(\tilde{\mathcal{H}}_d,\tilde{\mathcal{H}}_{d-1})$ by
\begin{align*}
\tilde{\xi}_d(s)1=\delta_{q(s)}\otimes\xi_d(\alpha(s,q(e)))1
\end{align*}
for all $s\in G$. We see that
\begin{align*}
\|\tilde{\xi}_d(s)1\|_{\tilde{\mathcal{H}}_{d-1}}^2=\|\xi_d(\alpha(s,q(e)))1\|_{\mathcal{H}_{d-1}}^2,
\end{align*}
which shows that
\begin{align*}
\sup_{s\in G}\|\tilde{\xi}_d(s)\|\leq\sup_{t\in\Gamma}\|\xi_d(t)\|.
\end{align*}
If $d\geq 3$, we define $\tilde{\xi}_i:G\to\mathbf{B}(\tilde{\mathcal{H}}_i,\tilde{\mathcal{H}}_{i-1})$ ($i=2,\ldots,d-1$) by
\begin{align*}
\tilde{\xi}_i(s)(\delta_x\otimes v)=\delta_{sx}\otimes\xi_i(\alpha(s,x))v
\end{align*}
for all $s\in G$, $x\in G/\Gamma$, $v\in\mathcal{H}_i$. Hence, for every choice of pairwise distinct points $x_1,\ldots,x_n\in G/\Gamma$, and every $v_1,\ldots,v_n\in\mathcal{H}_i$,
\begin{align*}
\bigg\|\tilde{\xi}_i(s)\bigg(\sum_{j=1}^n\delta_{x_j}\otimes v_j\bigg)\bigg\|^2
&=\sum_{j=1}^n\|\xi_i(\alpha(s,x_j))v_j\|^2\\
&\leq\bigg(\sup_{t\in\Gamma}\|\xi_i(t)\|\bigg)^2\sum_{j=1}^n \|v_j\|^2\\
&=\bigg(\sup_{t\in\Gamma}\|\xi_i(t)\|\bigg)^2\bigg\|\sum_{j=1}^n\delta_{x_j}\otimes v_j\bigg\|^2,
\end{align*}
which shows that
\begin{align*}
\sup_{s\in G}\|\tilde{\xi}_i(s)\|\leq\sup_{t\in\Gamma}\|\xi_i(t)\|.
\end{align*}
Finally, we define $\tilde{\xi}_1:G\to\mathbf{B}(\tilde{\mathcal{H}}_1,\tilde{\mathcal{H}}_0)$ by
\begin{align*}
\tilde{\xi}_1(s)(\delta_x\otimes v)=\langle\delta_{sx},\delta_{q(e)}\rangle\xi_1(\alpha(s,x))v
\end{align*}
for all $s\in G$, $x\in G/\Gamma$, $v\in\mathcal{H}_1$. Again, we have
\begin{align*}
\sup_{s\in G}\|\tilde{\xi}_1(s)\|\leq\sup_{t\in\Gamma}\|\xi_1(t)\|.
\end{align*}
Now, for every $s_1,\ldots,s_d\in G$,
\begin{align*}
\tilde{\xi}_1(s_1)&\cdots\tilde{\xi}_d(s_d)1 \\
&=\tilde{\xi}_1(s_1)\cdots\tilde{\xi}_{d-1}(s_{d-1})(\delta_{q(s_d)}\otimes\xi_d(\alpha(s_d,q(e)))1)\\
&=\tilde{\xi}_1(s_1)\cdots\tilde{\xi}_{d-2}(s_{d-2})(\delta_{q(s_{d-1}s_d)}\otimes\xi_{d-1}(\alpha(s_{d-1},q(s_d)))\xi_d(\alpha(s_d,q(e)))1)\\
&\ \ \vdots\\
&= \tilde{\xi}_1(s_1)(\delta_{q(s_2\cdots s_d)}\otimes\xi_{2}(\alpha(s_2,q(s_3\cdots s_d)))\cdots\xi_d(\alpha(s_d,q(e)))1)\\
&= \langle\delta_{q(s_1\cdots s_d)},\delta_{q(e)}\rangle\xi_1(\alpha(s_1,q(s_2\cdots s_d)))\cdots\xi_d(\alpha(s_d,q(e)))1\\
&= \langle\delta_{q(s_1\cdots s_d)},\delta_{q(e)}\rangle \varphi(\alpha(s_1,q(s_2\cdots s_d))\cdots\alpha(s_d,q(e))).
\end{align*}
By the identity \eqref{cocy_id}, this equals
\begin{align*}
\langle\delta_{q(s_1\cdots s_d)},\delta_{q(e)}\rangle\varphi(\alpha(s_1\cdots s_d, q(e))).
\end{align*}
On the other hand, for every $s\in\Gamma$, we have $\alpha(s,q(e))=s$. This shows that
\begin{align*}
\tilde{\xi}_1(s_1)\cdots\tilde{\xi}_d(s_d)1=\tilde{\varphi}(s_1\cdots s_d),
\end{align*}
where
\begin{align*}
\tilde{\varphi}(s)=\begin{cases}
\varphi(s), & \text{if } s\in\Gamma,\\
0, & \text{otherwise}.
\end{cases}
\end{align*}
We conclude that $\tilde{\varphi}$ belongs to $M_d(G)$. Moreover, by the previous computations,
\begin{align*}
\|\tilde{\varphi}\|_{M_d(G)}\leq \|\varphi\|_{M_d(\Gamma)}.
\end{align*}
Now recall that $\Upsilon:\C[G]\to\C[\Gamma]$ is given by $\Upsilon(f)=f|_\Gamma$. The estimate above, together with the identity
\begin{align*}
\langle\varphi,\Upsilon(f)\rangle=\langle\tilde{\varphi},f\rangle
\end{align*}
shows that $\Upsilon$ extends to a bounded map $X_d(G)\to X_d(\Gamma)$ of norm $1$ whose dual map $\Upsilon^*:M_d(\Gamma)\to M_d(G)$ is given by
\begin{align*}
\Upsilon^*(\varphi)&=\tilde{\varphi}.\qedhere
\end{align*}
\end{proof}

We can now prove that $M_{d+1}(G)\subsetneq M_d(G)$ when $G$ contains a free subgroup.

\begin{proof}[Proof of Proposition \ref{Prop_free_sub}]
Since $G$ contains a nonabelian free subgroup, it contains a copy of $\F_\infty$; see the proof of \cite[Corollary D.5.3]{CecCoo}. Let $d\geq 2$, and $\varphi\in M_d(\F_\infty)\setminus M_{d+1}(\F_\infty)$, which exists by \cite[Theorem 5.1]{Pis}. By Lemma \ref{Lem_ext}, the function $\tilde{\varphi}:G\to\C$ given by
\begin{align*}
\tilde{\varphi}(s)=\begin{cases}
\varphi(s), & s\in\F_\infty,\\
0,& \text{otherwise},
\end{cases}
\end{align*}
belongs to $M_d(G)$, and
\begin{align*}
\|\tilde{\varphi}\|_{M_d(G)}\leq \|\varphi\|_{M_d(\F_\infty)}.
\end{align*}
On the other hand, $\tilde{\varphi}$ does not belong to $M_{d+1}(G)$. Indeed, if this were the case, then the restriction of $\tilde{\varphi}$ to $\F_\infty$ would be an element of $M_{d+1}(\F_\infty)$; see \cite[\S 2]{Pis}. As this restriction is exactly $\varphi$, this is not possible. We conclude that
\begin{align*}
\tilde{\varphi}&\in M_d(G)\setminus M_{d+1}(G).\qedhere
\end{align*}
\end{proof}

\begin{rmk}
Pisier showed in \cite[Theorem 2.9]{Pis} that, if $G$ is unitarisable, then there is $d_0\geq 2$ such that $M_d(G)=M_{d_0}(G)$ for all $d\geq d_0$. Thus Proposition \ref{Prop_free_sub} gives a new proof of the fact that a group containing a nonabelian free subgroup is not unitarisable; see \cite[Theorem 2.7]{Pis2}.
\end{rmk}

\section{$M_d$-AP and group extensions}\label{S_MdAP_ext}
In this section, we prove Theorem \ref{Thm_MdAP_ext}. As was mentioned in the introduction, this was proved in \cite[Lemma 4.3]{Ver} in the particular case when the subgroup $\Gamma$ is amenable. Lemma \ref{Lem_ext} is the ingredient that was missing for the argument to work in full generality. Hence we can now simply repeat the proof of \cite[Lemma 4.3]{Ver} in our more general setting.

\begin{proof}[Proof of Theorem \ref{Thm_MdAP_ext}]
	We fix $G$, $\Gamma$, and $d\geq 2$ such that both $\Gamma$ and $G/\Gamma$ satisfy $M_d$-AP. For each $f\in\C[G]$, let $\Phi_f:\C[G]\to \C[G]$ be the convolution map
\begin{align*}
\Phi_f(g)=f\ast g.
\end{align*}
Observing that
\begin{align*}
\|\Phi_f(g)\|_{X_d(G)} \leq \sum_{s\in G} |f(s)|\|\delta_s\ast g\|_{X_d(G)} \leq \|f\|_1\|g\|_{X_d(G)},
\end{align*}
we see that $\Phi_f$ extends to a bounded map $\Phi_f:X_d(G)\to X_d(G)$ of norm at most $\|f\|_1$. Now let $\Upsilon:X_d(G)\to X_d(\Gamma)$ be the map given by Lemma \ref{Lem_ext}. Defining $\Psi_f=\Upsilon\circ\Phi_f$, we get a bounded map from $X_d(G)$ to $X_d(\Gamma)$ such that, for all $g\in\C[G]$,
\begin{align*}
\Psi_f(g)=\left.\left(f\ast g\right)\right|_\Gamma
\end{align*}
Then the adjoint map $\Psi_f^*:M_d(\Gamma)\to M_d(G)$ is weak*-weak*-continuous. A simple calculation shows that, for all $\varphi\in M_d(\Gamma)$,
\begin{align*}
\Psi_f^*(\varphi)=\check{f}\ast\Upsilon^*(\varphi),
\end{align*}
where $\check{f}(t)=f(t^{-1})$. Now, since $\Gamma$ satisfies $M_d$-AP, there is a net $(\varphi_i)$ in $\C[\Gamma]$ converging to $1$ in $\sigma(M_d(\Gamma),X_d(\Gamma))$. Thus, $\Psi_f^*(\varphi_i)$ converges to $\check{f}\ast\mathds{1}_\Gamma$ in $\sigma(M_d(G),X_d(G))$, where $\mathds{1}_\Gamma$ is the indicator function of $\Gamma$ in $G$. Therefore
\begin{align}\label{incl_w*cl}
\left\{ f\ast\mathds{1}_\Gamma\ :\ f\in\C[G]\right\} \subseteq \overline{\C[G]}^{\sigma(M_d(G),X_d(G))}.
\end{align}
The rest of the proof consists in showing that the constant function $1$ is in the\linebreak $\sigma(M_d(G),X_d(G))$-closure of the left hand side of \eqref{incl_w*cl}, which is done in the exact same way as in the proof of \cite[Lemma 4.3]{Ver} since it relies only on the fact that $G/\Gamma$ satisfies $M_d$-AP. We give the main ideas here, and refer the reader to \cite{Ver} for details. Let $q:G\to G/\Gamma$ be the quotient map. The map $\Theta:M_d(G/\Gamma)\to M_d(G)$, defined by $\Theta(\psi)=\psi\circ q$, is\linebreak weak*-weak*-continuous. Taking a net $(\psi_i)$ in $\C[G/\Gamma]$ converging to $1$ in\linebreak $\sigma(M_d(G/\Gamma),X_d(G/\Gamma))$, we find $f_i$ in $\C[G]$ such that
\begin{align*}
\Theta(\psi_i)=\psi_i\circ q=f_i\ast\mathds{1}_\Gamma.
\end{align*}
Hence $f_i\ast\mathds{1}_\Gamma$ converges to $1$ in $\sigma(M_d(G),X_d(G))$.
\end{proof}

Now we prove the stability of $M_d$-AP under (semi-)direct products and free products.

\begin{proof}[Proof of Corollary \ref{Cor_MdAP_prod}]
Fix $d\geq 2$. Let us consider first the case of semidirect products. Let $G_1, G_2$ be two discrete groups satisfying $M_d$-AP, and such that $G_1$ acts on $G_2$ by automorphisms. This action allows us to define the semidirect product $G_1\ltimes G_2$; see \cite[\S 5.4]{DruKap} for details. We have the following exact sequence:
\begin{align*}
1 \to G_1 \to G_1\ltimes G_2 \to G_2 \to 1.
\end{align*}
Then, by Theorem \ref{Thm_MdAP_ext}, $G_1\ltimes G_2$ satisfies $M_d$-AP. Since a direct product is a particular case of a semidirect product, where the defining action is trivial, we conclude that $M_d$-AP is also stable under direct products. Finally, for a free product, we have the following exact sequence:
\begin{align*}
1 \to F \to G_1\ast G_2 \to G_1\times G_2 \to 1,
\end{align*}
where $F$ is a free group; see e.g. \cite[\S 4.5]{Yan}. By the previous discussion, $G_1\times G_2$ satisfies $M_d$-AP. Moreover, by \cite[Theorem 1.3]{Ver}, $F$ satisfies $M_d$-AP too. Therefore, by Theorem \ref{Thm_MdAP_ext}, so does $G_1\ast G_2$.
\end{proof}

\section{$M_d$-weak amenability}\label{S_Mdwa}

Now we turn to $M_d$-weak amenability. Recall that a locally compact group $G$ is $M_d$-weakly amenable ($d\geq 2$) if there is $C\geq 1$ such that the constant function $1$ is in the $\sigma(M_d(G),X_d(G))$-closure of the set
\begin{align*}
	\left\{\varphi\in C_c(G)\ \mid\ \|\varphi\|_{M_d(G)}\leq C\right\}.
\end{align*}
The constant $\boldsymbol\Lambda(G,d)$ is defined as the infimum of all $C\geq 1$ such that the condition above holds. This property may be reinterpreted as the existence of an approximate identity in the Fourier algebra $A(G)$ that is bounded for the norm of $M_d(G)$. In order to clearly state this characterisation, we need to review some facts about representations.

\subsection{Matrix coefficients of representations}
Let $G$ be a locally compact group, and let $\pi:G\to\mathbf{B}(\mathcal{H})$ be a linear representation, where $\mathcal{H}$ is a Hilbert space. We say that $\pi$ is uniformly bounded if
\begin{align*}
	|\pi|=\sup_{s\in G}\|\pi(s)\| < \infty.
\end{align*}
We will only consider representations that are continuous for the strong operator topology, meaning that the map
\begin{align*}
	s\in G \longmapsto \pi(s)\xi \in\mathcal{H}
\end{align*}
is continuous for every $\xi\in\mathcal{H}$. We say that $\varphi:G\to\C$ is a coefficient of $\pi$ if there are $\xi,\eta\in\mathcal{H}$ such that, for every $s\in G$,
\begin{align}\label{coef_rep}
	\varphi(s)=\langle\pi(s)\xi,\eta\rangle.
\end{align}
Following \cite{Pis}, for every $\theta\geq 1$, we let $B_\theta(G)$ denote the space of all coefficients of representations $\pi$ of $G$ with $|\pi|\leq\theta$. We endow this space with the norm 
\begin{align*}
	\|\varphi\|_{B_\theta(G)}=\inf\|\xi\| \|\eta\|,
\end{align*}
where the infimum is taken over all decompositions as in \eqref{coef_rep}, with $|\pi|\leq\theta$. As in the case of $M_d(G)$, this is a dual space. Let $\tilde{A}_\theta(G)$ be the completion of $L^1(G)$ for the norm
\begin{align*}
	\|g\|_{\tilde{A}_\theta(G)}=\sup\left\{\left|\int_G\varphi(t)g(t)\,dt\right|\ \mid\ \varphi\in B_\theta(G),\ \|\varphi\|_{ B_\theta(G)}\leq 1\right\}.
\end{align*}
Then $B_\theta(G)$ can be identified with the dual space of $\tilde{A}_\theta(G)$; see \cite[Proposition 2.10]{Ver2}. We will need the following fact.

\begin{lem}\label{Lem_Btheta_Md}
	Let $G$ be a locally compact group, and let $d\geq 2$ be an integer. For every $\theta\geq 1$, the inclusion $B_\theta(G)\hookrightarrow M_d(G)$ is a weak*-weak*-continuous map of norm at most $\theta^d$.
\end{lem}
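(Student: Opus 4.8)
The plan is to prove the two assertions separately: first, that $B_\theta(G)\subseteq M_d(G)$ with $\|\varphi\|_{M_d(G)}\leq\theta^d\|\varphi\|_{B_\theta(G)}$, and second, that the resulting linear inclusion is continuous for the weak* topologies $\sigma(B_\theta(G),\tilde A_\theta(G))$ and $\sigma(M_d(G),X_d(G))$.

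For the norm estimate, start with a coefficient $\varphi(s)=\langle\pi(s)\xi,\eta\rangle$ where $|\pi|\leq\theta$. The key idea is to telescope the product $\pi(t_1\cdots t_d)=\pi(t_1)\cdots\pi(t_d)$, so that for all $t_1,\dots,t_d\in G$,
\begin{align*}
\varphi(t_1\cdots t_d)=\langle\pi(t_1)\cdots\pi(t_d)\xi,\eta\rangle.
\end{align*}
This already has the shape of \eqref{phi_xi}: set $\mathcal H_1=\cdots=\mathcal H_{d-1}=\mathcal H$ (and $\mathcal H_0=\mathcal H_d=\mathbb C$), and define $\xi_d(t_d)1=\pi(t_d)\xi$, $\xi_i(t_i)=\pi(t_i)$ for $2\leq i\leq d-1$, and $\xi_1(t_1)v=\langle\pi(t_1)v,\eta\rangle$ for $v\in\mathcal H$. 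Then $\sup_{t_d}\|\xi_d(t_d)\|\leq\theta\|\xi\|$, $\sup_{t_i}\|\xi_i(t_i)\|\leq\theta$ for each of the $d-2$ middle indices, and $\sup_{t_1}\|\xi_1(t_1)\|\leq\theta\|\eta\|$, so the product of the suprema is at most $\theta^d\|\xi\|\|\eta\|$. Taking the infimum over decompositions of $\varphi$ as a coefficient with $|\pi|\leq\theta$ gives $\|\varphi\|_{M_d(G)}\leq\theta^d\|\varphi\|_{B_\theta(G)}$. (One should remark that $\varphi$ is automatically in $C_b(G)$, being a matrix coefficient of a strongly continuous uniformly bounded representation.)

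For the weak*-weak*-continuity, the cleanest route is the standard duality argument: a bounded linear map between dual Banach spaces is weak*-weak*-continuous if and only if it is the adjoint of a bounded map between the preduals. So I would identify the predual map explicitly. On $L^1(G)$, the pairing $\langle\varphi,g\rangle=\int_G\varphi(t)g(t)\,dt$ is the same for both duals, and the norm estimate just proved says precisely that $\|g\|_{\tilde A_\theta(G)}\geq\theta^{-d}\|g\|_{X_d(G)}$ for $g\in L^1(G)$; hence the identity on $L^1(G)$ extends to a bounded map $X_d(G)\to\tilde A_\theta(G)$ of norm at most $\theta^d$ (here I use that $L^1(G)$ is dense in $X_d(G)$ by definition, and that $\tilde A_\theta(G)$ is complete). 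Call this map $j$. A direct check on the dense subspace $L^1(G)$ shows that the adjoint $j^*\colon B_\theta(G)\to M_d(G)$ coincides with the inclusion constructed above, since $\langle j^*\varphi,g\rangle=\langle\varphi,jg\rangle=\int\varphi g=\langle\varphi,g\rangle$ for all $g\in L^1(G)$ and these functionals separate points of $M_d(G)$. Being an adjoint, $j^*$ is weak*-weak*-continuous, which is the claim.

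I do not expect a serious obstacle here; the only point requiring a little care is making sure the pairings defining the two predual norms are genuinely compatible so that the identity of $L^1(G)$ really does induce the map $j$ — but this is immediate from the definitions of $X_d(G)$ and $\tilde A_\theta(G)$ quoted in the excerpt. The telescoping decomposition is the heart of the argument and is essentially forced by the definition of $M_d$-multiplier.
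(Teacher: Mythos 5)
Your proposal is correct and follows essentially the same route as the paper: the telescoping $\varphi(s_1\cdots s_d)=\langle\pi(s_1)\cdots\pi(s_d)\xi,\eta\rangle$ for the norm bound, and the identification of the inclusion as the adjoint of the identity $L^1(G)\to L^1(G)$ viewed as a map $X_d(G)\to\tilde A_\theta(G)$ for the weak*-continuity (the paper states both steps more tersely, without writing out the maps $\xi_i$). One small slip: the inequality between predual norms should read $\|g\|_{\tilde A_\theta(G)}\leq\theta^d\|g\|_{X_d(G)}$, not the reverse — this is the direction that actually follows from the norm estimate (the $B_\theta$-unit ball sits inside the $\theta^d$-ball of $M_d(G)$, so the supremum defining $\|g\|_{\tilde A_\theta(G)}$ is dominated) and the direction needed for the identity to extend to a bounded map $X_d(G)\to\tilde A_\theta(G)$; with that corrected the argument is complete.
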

\begin{proof}
	Let $\varphi\in B_\theta(G)$, and write
	\begin{align*}
		\varphi(s)=\langle\pi(s)\xi,\eta\rangle
	\end{align*}
	as in \eqref{coef_rep}. Then, for all $s_1,\ldots,s_d\in G$,
	\begin{align*}
		\varphi(s_1 \cdots s_d) = \langle\pi(s_1)\cdots\pi(s_d)\xi,\eta\rangle.
	\end{align*}
	This shows that $\varphi$ is an element of $M_d(G)$, and
	\begin{align*}
		\|\varphi\|_{M_d(G)} \leq \theta^d 	\|\varphi\|_{B_\theta(G)}.
	\end{align*}
	Therefore the inclusion $B_\theta(G)\hookrightarrow M_d(G)$ is well defined and has norm at most $\theta^d$. The fact that it is weak*-weak*-continuous follows from observing that this inclusion is the dual map of the identity $L^1(G)\to L^1(G)$, when we endow $L^1(G)$ with the norm of $X_d(G)$ and $\tilde{A}_\theta(G)$ respectively.
\end{proof}

When $\theta=1$, $B_\theta(G)$ is called the Fourier--Stieltjes algebra of $G$, and we denote it by $B(G)$; we refer the reader to \cite{KanLau} for a detailed presentation of $B(G)$. This is the space of coefficients of unitary representations of $G$, and it is a Banach algebra for pointwise operations. One can define a subalgebra of $B(G)$ by looking at a very particular representation. The left regular representation $\lambda:G\to\mathbf{B}(L^2(G))$ is defined by
\begin{align*}
	\lambda(s)f(t)=f(s^{-1}t)
\end{align*}
for all $s,t\in G$, $f\in L^2(G)$. The Fourier algebra $A(G)$ is the subalgebra of $B(G)$ given by all coefficients of $\lambda$. In principle, $A(G)$ is simply a subset of $B(G)$, but it can be shown that it is actually an ideal. Moreover, $A(G)$ can be alternatively defined as the closure of $C_c(G)$ in $B(G)$; see \cite[Proposition 2.3.3]{KanLau}.

The following result is an adaptation of \cite[Lemma 2.2]{Haa} to our setting; see \cite[Proposition 0.5]{Bat2} and \cite[Remark 2.3]{Bat} for more details.

\begin{prop}\label{Prop_Mdwa_ucc}
Let $G$ be a locally compact group, $d\geq 2$ an integer, and $C>1$. The following are equivalent:
\begin{itemize}
\item[(i)] The group $G$ is $M_d$-weakly amenable with $\boldsymbol\Lambda(G,d)<C$.
\item[(ii)] For every compact subset $K\subseteq G$ and every $\varepsilon>0$, there is $\varphi\in A(G)$ such that $\|\varphi\|_{M_d(G)}<C$ and
\begin{align*}
\sup_{x\in K}|\varphi(x)-1| < \varepsilon.
\end{align*}
\item[(iii)] For every compact subset $K\subseteq G$ and every $\varepsilon>0$, there is $\varphi\in C_c(G)$ such that $\|\varphi\|_{M_d(G)}<C$ and
\begin{align*}
\sup_{x\in K}|\varphi(x)-1| < \varepsilon.
\end{align*}
\end{itemize}
\end{prop}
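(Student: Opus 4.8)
The plan is to prove the cycle of implications $(i)\Rightarrow(ii)\Rightarrow(iii)\Rightarrow(i)$, with the bulk of the work concentrated in $(i)\Rightarrow(ii)$. The implications $(ii)\Rightarrow(iii)$ and $(iii)\Rightarrow(i)$ should be comparatively soft; let me first dispatch them. For $(ii)\Rightarrow(iii)$: given $\varphi\in A(G)$ with $\|\varphi\|_{M_d(G)}<C$ and $\sup_{x\in K}|\varphi(x)-1|<\varepsilon$, I would use that $A(G)$ is the closure of $C_c(G)$ in $B(G)$, and that the inclusion $B(G)=B_1(G)\hookrightarrow M_d(G)$ is contractive by Lemma~\ref{Lem_Btheta_Md} (with $\theta=1$, so norm at most $1$). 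So I approximate $\varphi$ by some $\psi\in C_c(G)$ in $B(G)$-norm, hence in $M_d(G)$-norm, closely enough that both $\|\psi\|_{M_d(G)}<C$ and $\sup_{x\in K}|\psi(x)-1|<\varepsilon$ still hold (using that $K$ is compact so uniform estimates on $K$ are controlled by the sup norm, which is dominated by $M_d$-norm). For $(iii)\Rightarrow(i)$: this is essentially a weak*-compactness argument. Fixing $C'$ with $\boldsymbol\Lambda(G,d)<C'$ to be shown, the family of $\varphi\in C_c(G)$ with $\|\varphi\|_{M_d(G)}\le C$ forms a net indexed by pairs $(K,\varepsilon)$, and its elements lie in the closed ball of radius $C$ in $M_d(G)=X_d(G)^*$, which is weak*-compact by Banach--Alaoglu. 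Any weak*-cluster point $\Phi$ of this net satisfies $\|\Phi\|_{M_d(G)}\le C$, and testing against elements of $L^1(G)\subseteq X_d(G)$ — in particular against (approximate units in) $L^1(G)$ — shows $\Phi=1$ as a functional, i.e. $1$ lies in the weak*-closure of $\{\varphi\in C_c(G):\|\varphi\|_{M_d(G)}\le C\}$. Since $C$ is any number exceeding the constant in~(iii), one gets $\boldsymbol\Lambda(G,d)\le C$ for all such $C$, and since the original hypothesis in~(iii) allowed $\|\varphi\|_{M_d(G)}<C$ strictly, one can in fact arrange $\boldsymbol\Lambda(G,d)<C$.

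The heart of the matter is $(i)\Rightarrow(ii)$: from $1$ being in the weak*-closure of $\{\varphi\in C_c(G):\|\varphi\|_{M_d(G)}\le C'\}$ for some $C'<C$, I must produce, for each compact $K$ and $\varepsilon>0$, an element of $A(G)$ with $M_d$-norm $<C$ that is uniformly within $\varepsilon$ of $1$ on $K$. The natural strategy, following the classical Haagerup-type argument (\cite[Lemma 2.2]{Haa}), is a two-step passage: first use the weak*-density to get elements of $C_c(G)$ that are close to $1$ in a weak sense, then convert weak approximation into uniform-on-compacta approximation by a convolution-smoothing and averaging trick. Concretely, start from a net $(\varphi_i)$ in $C_c(G)$ with $\|\varphi_i\|_{M_d(G)}\le C'$ and $\varphi_i\to 1$ in $\sigma(M_d(G),X_d(G))$. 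Pick a symmetric compact neighbourhood $V$ of $e$ and a normalized $g\in C_c(G)$ supported in $V$ (e.g. $g=\mathds{1}_V/|V|$, or a continuous bump), and consider the functions $\psi_i = \varphi_i * \tilde g$ where $\tilde g$ is built from $g$ in a way adapted to $M_d$; the convolution should both land $\psi_i$ in $A(G)$ (or at least in $B(G)$, then approximable in $A(G)$ as in the previous paragraph) and keep its $M_d$-norm controlled, while the weak* convergence $\varphi_i\to 1$ upgrades, after integrating against $g$, to pointwise-near-$1$ behaviour on any prescribed compact set. The point where I expect friction is getting the $M_d$-norm bound to survive the convolution with loss controlled enough that $C'\cdot(1+o(1))<C$; in the $d=2$ Herz--Schur setting this is standard, but here one needs the analogue for $M_d$, which is presumably where the references \cite[Proposition 0.5]{Bat2} and \cite[Remark 2.3]{Bat} do the work. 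I would therefore lean on those, citing them for the precise smoothing statement, and reduce $(i)\Rightarrow(ii)$ to invoking their construction.

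The main obstacle, then, is not any single computation but assembling the correct smoothing/convolution operator for $M_d$-multipliers and verifying it is weak*-weak*-continuous with the right norm estimate — everything else is Banach--Alaoglu plus the density of $C_c(G)$ in $A(G)$ in $B(G)$-norm. In the write-up I would state the reduction cleanly, handle $(ii)\Rightarrow(iii)$ and $(iii)\Rightarrow(i)$ in full since they are short, and for $(i)\Rightarrow(ii)$ give the structure of the argument while citing \cite[Lemma 2.2]{Haa}, \cite[Proposition 0.5]{Bat2}, and \cite[Remark 2.3]{Bat} for the technical smoothing step, exactly as the lemma statement already anticipates by calling itself ``an adaptation of \cite[Lemma 2.2]{Haa} to our setting.''
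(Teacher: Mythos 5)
First, a point of reference: the paper does not actually write out a proof of Proposition \ref{Prop_Mdwa_ucc}; it presents it as an adaptation of \cite[Lemma 2.2]{Haa} and defers to \cite[Proposition 0.5]{Bat2} and \cite[Remark 2.3]{Bat}. Your architecture --- the cycle $(i)\Rightarrow(ii)\Rightarrow(iii)\Rightarrow(i)$, with $(ii)\Rightarrow(iii)$ via the density of $B(G)\cap C_c(G)$ in $A(G)$ together with the contractive inclusion $B(G)\hookrightarrow M_d(G)$, and $(i)\Rightarrow(ii)$ via convolution smoothing --- is the standard one and is consistent with those sources. On the smoothing step, which you leave to the references while worrying about a possible norm loss: there is none. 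For a probability density $f\in C_c(G)$ one has $\|\check f\ast\varphi\|_{M_d(G)}\leq\|f\|_1\|\varphi\|_{M_d(G)}=\|\varphi\|_{M_d(G)}$; this is dual to the estimate $\|\Phi_f(g)\|_{X_d(G)}\leq\|f\|_1\|g\|_{X_d(G)}$ that the paper itself uses in the proof of Theorem \ref{Thm_MdAP_ext}. The upgrade from weak* to uniform-on-compacta convergence comes from writing $(\check f\ast\varphi_i)(x)=\langle\varphi_i,g_x\rangle$ with $x\mapsto g_x\in L^1(G)$ norm-continuous, so that $\{g_x \mid x\in K\}$ is norm-compact in $X_d(G)$ and a bounded weak*-convergent net converges uniformly on it. The resulting functions lie in $C_c(G)$, hence automatically in $A(G)$, since a compactly supported $M_d$-multiplier is in particular a compactly supported Herz--Schur multiplier.

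The one place where your argument genuinely does not close is the final step of $(iii)\Rightarrow(i)$. Your Banach--Alaoglu argument correctly shows that $1$ lies in the $\sigma(M_d(G),X_d(G))$-closure of $\{\varphi\in C_c(G)\mid\|\varphi\|_{M_d(G)}\leq C\}$, i.e. $\boldsymbol\Lambda(G,d)\leq C$. But the sentence ``since the hypothesis allowed $\|\varphi\|_{M_d(G)}<C$ strictly, one can in fact arrange $\boldsymbol\Lambda(G,d)<C$'' is not a proof: the norms of your net may accumulate at $C$, in which case no single $C''<C$ serves, and the weak*-closure of the open $C$-ball of $C_c(G)$ coincides with that of the closed $C$-ball. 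This is exactly the delicacy hidden in the strict inequalities of the statement: if $\boldsymbol\Lambda(G,d)=C$ were attained, then rescaling a u.c.c.-approximate identity of norm $\leq C$ by $1-\delta$ would still verify (iii) at this same $C$, so passing from $\leq C$ to $<C$ requires genuine additional input. You should either produce a uniform $C''<C$, or prove the equivalence with $\leq$ throughout (which is the form in \cite[Proposition 0.5]{Bat2} and suffices for every application in the paper) and address the strict version separately.
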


\subsection{Direct products}
Now we show that $M_d$-weak amenability is preserved under direct products. This fact will be crucial for the proof of Theorem \ref{Thm_BS}. We begin with the following observation; see \cite[Corollary 1.8]{deCHaa} for the case $d=2$.

\begin{lem}\label{Lem_prod_M_d}
Let $G,H$ be two locally compact groups, and let $d\geq 2$. Let $\varphi_1\in M_d(G)$ and $\varphi_2\in M_d(H)$, and define $\varphi:G\times H\to\C$ by
\begin{align*}
\varphi(x,y)=\varphi_1(x)\varphi_2(y)
\end{align*}
for all $x\in G$, $y\in H$. Then $\varphi$ belongs to $M_d(G\times H)$, and
\begin{align*}
\|\varphi\|_{M_d(G\times H)}\leq\|\varphi_1\|_{M_d(G)}\|\varphi_2\|_{M_d(H)}.
\end{align*}
\end{lem}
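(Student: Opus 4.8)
The plan is to build the decomposition of $\varphi$ on $G\times H$ directly from the given decompositions of $\varphi_1$ and $\varphi_2$ by concatenating them and then tensoring the intermediate Hilbert spaces appropriately. Suppose $\varphi_1(x_1\cdots x_d)=\xi_1(x_1)\cdots\xi_d(x_d)1$ with $\xi_i:G\to\mathbf{B}(\mathcal{H}_i,\mathcal{H}_{i-1})$, $\mathcal{H}_0=\mathcal{H}_d=\C$, and similarly $\varphi_2(y_1\cdots y_d)=\eta_1(y_1)\cdots\eta_d(y_d)1$ with $\eta_i:H\to\mathbf{B}(\mathcal{K}_i,\mathcal{K}_{i-1})$, $\mathcal{K}_0=\mathcal{K}_d=\C$. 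Set $\tilde{\mathcal{H}}_0=\tilde{\mathcal{H}}_d=\C$ and $\tilde{\mathcal{H}}_i=\mathcal{H}_i\otimes\mathcal{K}_i$ for $i=1,\dots,d-1$, and define $\tilde\xi_i:G\times H\to\mathbf{B}(\tilde{\mathcal{H}}_i,\tilde{\mathcal{H}}_{i-1})$ by $\tilde\xi_i(x,y)=\xi_i(x)\otimes\eta_i(y)$ for $i=2,\dots,d-1$, with the obvious modification at the ends: $\tilde\xi_1(x,y)=\xi_1(x)\otimes\eta_1(y)$ viewed as a map into $\C\otimes\C=\C$ (using $\mathcal{H}_0=\mathcal{K}_0=\C$), and $\tilde\xi_d(x,y)1=\xi_d(x)1\otimes\eta_d(y)1\in\mathcal{H}_{d-1}\otimes\mathcal{K}_{d-1}$.

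The key computation is then to check that $\tilde\xi_1(x_1,y_1)\cdots\tilde\xi_d(x_d,y_d)1$ factors as a product of the $G$-part and the $H$-part. Since $(\xi_1(x_1)\otimes\eta_1(y_1))\cdots(\xi_d(x_d)\otimes\eta_d(y_d))1 = (\xi_1(x_1)\cdots\xi_d(x_d)1)\otimes(\eta_1(y_1)\cdots\eta_d(y_d)1)$ by the multiplicativity of the tensor product of operators, this equals $\varphi_1(x_1\cdots x_d)\otimes\varphi_2(y_1\cdots y_d)=\varphi_1(x_1\cdots x_d)\varphi_2(y_1\cdots y_d)$ in $\C\otimes\C=\C$. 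Observing that $(x_1,y_1)\cdots(x_d,y_d)=(x_1\cdots x_d,y_1\cdots y_d)$ in $G\times H$, this is exactly $\varphi((x_1,y_1)\cdots(x_d,y_d))$, so \eqref{phi_xi} holds for $\varphi$ on $G\times H$ with the maps $\tilde\xi_i$. For the norm estimate, one uses that the operator norm of a tensor product of bounded Hilbert space operators is the product of the norms: $\sup_{(x,y)}\|\tilde\xi_i(x,y)\|=\big(\sup_x\|\xi_i(x)\|\big)\big(\sup_y\|\eta_i(y)\|\big)$ for each $i$. Multiplying over $i=1,\dots,d$ and taking the infimum over the decompositions of $\varphi_1$ and $\varphi_2$ gives $\|\varphi\|_{M_d(G\times H)}\leq\|\varphi_1\|_{M_d(G)}\|\varphi_2\|_{M_d(H)}$.

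The only genuinely delicate points are bookkeeping rather than conceptual: one must make sure the boundary spaces match up (that $\tilde{\mathcal{H}}_0=\tilde{\mathcal{H}}_d=\C$, which forces the slight asymmetry in the definitions of $\tilde\xi_1$ and $\tilde\xi_d$), and that $\varphi$ is indeed bounded and continuous so that it lies in $C_b(G\times H)$; continuity is automatic since $\varphi$ is a product of continuous functions, and boundedness follows from the norm estimate once one knows it is an $M_d$-multiplier. One should also briefly note that the maps $\tilde\xi_i$ are bounded, which is immediate from the factorization of the norm. I do not expect any real obstacle here; the argument is a straightforward tensoring of decompositions, and the case $d=2$ cited from \cite[Corollary 1.8]{deCHaa} is exactly this construction with a single intermediate space.
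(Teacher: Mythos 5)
Your proposal is correct and is essentially identical to the paper's proof: both tensor the given decompositions, setting $\psi_i(x,y)=\xi_i(x)\otimes\eta_i(y)$ on the intermediate spaces $\mathcal{H}_i\otimes\mathcal{K}_i$ (with $\C\otimes\C=\C$ at the ends), and bound the norm by the product of the suprema. The paper phrases the passage to the infimum via auxiliary constants $C_1>\|\varphi_1\|_{M_d(G)}$, $C_2>\|\varphi_2\|_{M_d(H)}$, but this is only a cosmetic difference.
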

\begin{proof}
First observe that $\varphi$ is continuous because both $\varphi_1$ and $\varphi_2$ are. Now let $C_1>\|\varphi_1\|_{M_d(G)}$ and $C_2>\|\varphi_2\|_{M_d(H)}$. By definition, there are Hilbert spaces $\mathcal{H}_0,\ldots,\mathcal{H}_d$ with $\mathcal{H}_0=\mathcal{H}_d=\C$, and bounded maps $\xi_i:G\to\mathbf{B}(\mathcal{H}_i,\mathcal{H}_{i-1})$ ($i=1,\ldots,d$) such that
\begin{align*}
\varphi_1(x_1\cdots x_d)=\xi_1(x_1)\cdots \xi_d(x_d)1
\end{align*}
for all $x_1,\ldots,x_d\in G$, and
\begin{align*}
\left(\sup_{x_1\in G}\|\xi_1(x_1)\|\right)\cdots\left(\sup_{x_d\in G}\|\xi_d(x_d)\|\right) < C_1.
\end{align*}
Similarly, we find bounded maps $\eta_i:H\to\mathbf{B}(\mathcal{K}_i,\mathcal{K}_{i-1})$ ($i=1,\ldots,d$) such that
\begin{align*}
\varphi_2(y_1\cdots y_d)=\eta_1(y_1)\cdots \eta_d(y_d)1
\end{align*}
for all $ y_1,\ldots,y_d\in H$, and
\begin{align*}
\left(\sup_{y_1\in H}\|\eta_1(y_1)\|\right)\cdots\left(\sup_{y_d\in H}\|\eta_d(y_d)\|\right) < C_2.
\end{align*}
Defining $\psi_i:G\times H\to\mathbf{B}(\mathcal{H}_i\otimes\mathcal{K}_i,\mathcal{H}_{i-1}\otimes\mathcal{K}_{i-1})$ by
\begin{align*}
\psi_i(x_i,y_i)=\xi_i(x_i)\otimes\eta_i(y_i),
\end{align*}
we get, for all $x_1,\ldots,x_d\in G$ and $y_1,\ldots,y_d\in H$,
\begin{align*}
\varphi((x_1,y_1)\cdots(x_1,y_d))&=(\xi_1(x_1)\otimes\eta_1(y_1))\cdots(\xi_d(x_d)\otimes\eta_d(y_d))1\\
&= \psi_1(x_1,y_1)\cdots\psi_d(x_d,y_d)1,
\end{align*}
which shows that $\varphi$ belongs to $M_d(G\times H)$, and
\begin{align*}
\|\varphi\|_{M_d(G\times H)}&\leq \left(\sup_{(x_1,y_1)\in G\times H}\|\psi_1(x_1,y_1)\|\right)\cdots\left(\sup_{(x_d,y_d)\in G\times H}\|\psi_d(x_d,y_d)\|\right)\\
&\leq \left(\sup_{x_1\in G}\|\xi_1(x_1)\|\right)\left(\sup_{y_1\in H}\|\eta_1(y_1)\|\right)
\cdots \left(\sup_{x_d\in G}\|\xi_d(x_d)\|\right)\left(\sup_{y_d\in H}\|\eta_d(y_d)\|\right)\\
&< C_1 C_2.
\end{align*}
Since $C_1>\|\varphi_1\|_{M_d(G)}$ and $C_2>\|\varphi_2\|_{M_d(H)}$ were arbitrary, we conclude that
\begin{align*}
\|\varphi\|_{M_d(G\times H)}&\leq\|\varphi_1\|_{M_d(G)}\|\varphi_2\|_{M_d(H)}.\qedhere
\end{align*}
\end{proof}

With this characterisation, we can prove the following stability result.

\begin{lem}\label{Lem_Mdwa_prod}
Let $G,H$ be two locally compact groups, and let $d\geq 2$ be an integer. Then $G\times H$ is $M_d$-weakly amenable if and only if both $G$ and $H$ are. Moreover, in this case,
\begin{align*}
\boldsymbol\Lambda(G\times H,d)\leq\boldsymbol\Lambda(G,d)\boldsymbol\Lambda(H,d).
\end{align*}
\end{lem}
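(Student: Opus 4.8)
The plan is to deduce Lemma~\ref{Lem_Mdwa_prod} from the multiplicativity estimate of Lemma~\ref{Lem_prod_M_d} together with the characterisation of $M_d$-weak amenability in Proposition~\ref{Prop_Mdwa_ucc}(iii), plus one restriction argument for the converse. I will treat the two implications separately.

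\textbf{The ``if'' direction.} Suppose $G$ and $H$ are $M_d$-weakly amenable, and fix $C_1>\boldsymbol\Lambda(G,d)$ and $C_2>\boldsymbol\Lambda(H,d)$. Given a compact set $K\subseteq G\times H$ and $\varepsilon>0$, let $K_1=\operatorname{pr}_G(K)$ and $K_2=\operatorname{pr}_H(K)$, which are compact. By Proposition~\ref{Prop_Mdwa_ucc}(iii) applied to $G$ and to $H$, choose $\varphi_1\in C_c(G)$ with $\|\varphi_1\|_{M_d(G)}<C_1$ and $\sup_{x\in K_1}|\varphi_1(x)-1|<\delta$, and $\varphi_2\in C_c(H)$ with $\|\varphi_2\|_{M_d(H)}<C_2$ and $\sup_{y\in K_2}|\varphi_2(y)-1|<\delta$, where $\delta>0$ is to be chosen. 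Set $\varphi(x,y)=\varphi_1(x)\varphi_2(y)$. Then $\varphi\in C_c(G\times H)$, and by Lemma~\ref{Lem_prod_M_d}, $\|\varphi\|_{M_d(G\times H)}\leq\|\varphi_1\|_{M_d(G)}\|\varphi_2\|_{M_d(H)}<C_1C_2$. For $(x,y)\in K$ we have $x\in K_1$, $y\in K_2$, so writing $\varphi_1(x)\varphi_2(y)-1=(\varphi_1(x)-1)\varphi_2(y)+(\varphi_2(y)-1)$ and using that $|\varphi_2|$ is uniformly bounded on $K_2$ by $1+\delta$, we get $|\varphi(x,y)-1|\leq\delta(1+\delta)+\delta$, which is $<\varepsilon$ for $\delta$ small enough. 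By Proposition~\ref{Prop_Mdwa_ucc}, $G\times H$ is $M_d$-weakly amenable with $\boldsymbol\Lambda(G\times H,d)\leq C_1C_2$; letting $C_1\downarrow\boldsymbol\Lambda(G,d)$ and $C_2\downarrow\boldsymbol\Lambda(H,d)$ gives the claimed inequality. (If $\boldsymbol\Lambda(G,d)=1$ or $\boldsymbol\Lambda(H,d)=1$ one works with $C_i>1$ and the same argument applies, since the definition quantifies over $C\geq 1$; the infimum over such $C_1C_2$ is still $\boldsymbol\Lambda(G,d)\boldsymbol\Lambda(H,d)$.)

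\textbf{The ``only if'' direction.} Suppose $G\times H$ is $M_d$-weakly amenable. We identify $G$ with the closed subgroup $G\times\{e_H\}$ via $x\mapsto(x,e_H)$. The restriction map $\varphi\mapsto\varphi|_{G\times\{e_H\}}$ sends $M_d(G\times H)$ into $M_d(G)$ contractively: this is immediate from the defining decomposition \eqref{phi_xi}, by precomposing each $\xi_i$ with the inclusion $G\hookrightarrow G\times H$. It also sends $C_c(G\times H)$ into $C_c(G)$ (or at least into $C_b(G)$ with compact-in-$G$ support of the restriction), and it preserves the value $1$. Hence, given a compact $K\subseteq G$ and $\varepsilon>0$, apply Proposition~\ref{Prop_Mdwa_ucc}(iii) to $G\times H$ with the compact set $K\times\{e_H\}$ and the same $\varepsilon$, obtaining $\psi\in C_c(G\times H)$ with $\|\psi\|_{M_d(G\times H)}<C$ and $\sup_{K\times\{e_H\}}|\psi-1|<\varepsilon$; then $\varphi:=\psi(\,\cdot\,,e_H)$ lies in $C_c(G)$, has $\|\varphi\|_{M_d(G)}<C$, and satisfies $\sup_{x\in K}|\varphi(x)-1|<\varepsilon$. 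So $G$ is $M_d$-weakly amenable, and symmetrically so is $H$.

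\textbf{Main obstacle.} The only genuinely delicate point is the restriction step in the converse: one must check that restricting a compactly supported continuous function on $G\times H$ to the slice $G\times\{e_H\}$ again lands in $C_c(G)$ (it does, since the slice is closed and the support is compact) and, more importantly, that restriction is contractive $M_d(G\times H)\to M_d(G)$ — this is where the product structure is used, but it follows directly from the definition by substituting $t_i=(s_i,e_H)$ in \eqref{phi_xi}. Everything else is a routine $\varepsilon$-$\delta$ bookkeeping exercise built on Lemma~\ref{Lem_prod_M_d} and Proposition~\ref{Prop_Mdwa_ucc}.
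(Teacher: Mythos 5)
Your proof is correct. The ``if'' direction is essentially identical to the paper's argument: choose $\varphi_1,\varphi_2$ via Proposition~\ref{Prop_Mdwa_ucc}(iii), form the product function, invoke Lemma~\ref{Lem_prod_M_d}, and do the same $\delta(1+\delta)+\delta$ bookkeeping. The only divergence is in the converse. The paper disposes of it in one line by citing the general fact that $M_d$-weak amenability passes to closed subgroups (\cite[Corollary 0.6]{Bat2}), whereas you give a self-contained argument by restricting to the slice $G\times\{e_H\}$: restriction of an $M_d$-multiplier to a subgroup is contractive (substitute $t_i=(s_i,e_H)$ in \eqref{phi_xi}), the restriction of a compactly supported function to a closed slice is again compactly supported, and the uniform approximation on $K\times\{e_H\}$ descends to $K$. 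This is a correct special case of the cited subgroup result and has the merit of keeping the lemma independent of \cite{Bat2}; what you lose is only generality (your argument as written exploits that $G$ is a direct factor, though in fact it works verbatim for any closed subgroup once one notes that characterisation (iii) of Proposition~\ref{Prop_Mdwa_ucc} is what makes the restriction argument go through). One small point worth making explicit: after restricting, you conclude $\boldsymbol\Lambda(G,d)<C$ for every $C>\boldsymbol\Lambda(G\times H,d)$, which also yields the (unclaimed but true) inequality $\max\{\boldsymbol\Lambda(G,d),\boldsymbol\Lambda(H,d)\}\leq\boldsymbol\Lambda(G\times H,d)$.
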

\begin{proof}
Assume first that $G$ and $H$ are $M_d$-weakly amenable, and let $C_1>\boldsymbol\Lambda(G,d)$, $C_2>\boldsymbol\Lambda(H,d)$. Let $K$ be a compact subset of $G\times H$ and $\varepsilon>0$. Then there are compact subsets $K_1\subseteq G$, $K_2\subseteq H$ such that
\begin{align*}
K\subseteq K_1\times K_2.
\end{align*}
By Proposition \ref{Prop_Mdwa_ucc}, there are $\varphi_1\in C_c(G)$ and $\varphi_2\in C_c(H)$ such that
\begin{align*}
\|\varphi_1\|_{M_d(G)}&<C_1, & \|\varphi_2\|_{M_d(H)}&<C_2,
\end{align*}
and
\begin{align*}
\sup_{x\in K_1}|\varphi_1(x)-1| &<\delta, & \sup_{y\in K_2}|\varphi_2(y)-1| &<\delta,
\end{align*}
with $\delta$ small enough so that $\delta^2+2\delta<\varepsilon$. Now, by Lemma \ref{Lem_prod_M_d}, the function $\varphi:G\times H\to\C$, defined by
\begin{align*}
\varphi(x,y)=\varphi_1(x)\varphi_2(y),
\end{align*}
satisfies
\begin{align*}
\|\varphi\|_{M_d(G\times H)}\leq \|\varphi_1\|_{M_d(G)}\|\varphi_2\|_{M_d(H)} < C_1C_2.
\end{align*}
Moreover, it is compactly supported because both $\varphi_1$ and $\varphi_2$ are. Finally, for every $(x,y)\in K$,
\begin{align*}
|\varphi(x,y)-1| &= |\varphi_1(x)\varphi_2(y)-\varphi_1(x)+\varphi_1(x)-1|\\
&\leq |\varphi_1(x)| |\varphi_2(y)-1| + |\varphi_1(x)-1|\\
&\leq (1+\delta)\delta + \delta\\
&<\varepsilon.
\end{align*}
Since $K$ and $\varepsilon$ were arbitrary, by Proposition \ref{Prop_Mdwa_ucc}, $G\times H$ is $M_d$-weakly amenable with
\begin{align*}
\boldsymbol\Lambda(G\times H,d)< C_1C_2,
\end{align*}
which shows that
\begin{align*}
\boldsymbol\Lambda(G\times H,d)\leq\boldsymbol\Lambda(G,d)\boldsymbol\Lambda(H,d).
\end{align*}
Conversely, if we assume that $G\times H$ is $M_d$-weakly amenable, by \cite[Corollary 0.6]{Bat2}, both $G$ and $H$ are $M_d$-weakly amenable too.
\end{proof}

\subsection{Amenable groups}
We will also use the fact that amenable groups are $M_d$-weakly amenable. This result has already appeared in \cite[Corollary 2.6]{Ver} for discrete groups and in \cite[Remark 3.6]{Bat} for $\Z$, where it is mentioned that a similar proof works for any locally compact group. For completeness, we include here the proof of the general case. Let $G$ be a locally compact group, endowed with a left Haar measure $\mu$. Recall that $G$ is amenable if, for every compact subset $K\subset G$ and every $\varepsilon>0$, there is a measurable subset $U\subseteq G$ with $0<\mu(U)<\infty$ such that, for every $s\in K$,
\begin{align*}
\frac{\mu(sU\Delta U)}{\mu(U)}<\varepsilon.
\end{align*}
Moreover, in this case, the set $U$ may be assumed to be compact; see \cite[Theorem 7.3]{Pie} and \cite[Proposition 7.4]{Pie}.

\begin{lem}\label{Lem_am->Md-wa}
Let $G$ be a locally compact group. If $G$ is amenable, then it is $M_d$-weakly amenable with $\boldsymbol\Lambda(G,d)=1$ for every $d\geq 2$.
\end{lem}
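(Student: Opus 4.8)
The plan is to exploit the classical Følner-set characterisation of amenability together with the description of $M_d$-weak amenability provided by Proposition \ref{Prop_Mdwa_ucc}(iii). Given a compact set $K\subseteq G$ and $\varepsilon>0$, I would first pick, using amenability, a compact set $U\subseteq G$ with $0<\mu(U)<\infty$ such that $\mu(sU\Delta U)/\mu(U)<\varepsilon$ for every $s\in K$ (and for convenience also for every $s\in K^{-1}K$, say, by enlarging $K$). The natural candidate for the approximating multiplier is the normalised coefficient of the left regular representation
\begin{align*}
\varphi(s)=\frac{\langle\lambda(s)\mathds{1}_U,\mathds{1}_U\rangle}{\mu(U)}=\frac{\mu(sU\cap U)}{\mu(U)},
\end{align*}
which is exactly the function used in the standard proof that amenable groups are weakly amenable with constant $1$. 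This $\varphi$ lies in $A(G)$, it is compactly supported (its support is contained in $UU^{-1}$), and $\varphi(e)=1$; moreover $\|\varphi\|_{B(G)}\leq\|\mathds{1}_U\|_2^2/\mu(U)=1$, so in particular $\varphi\in B(G)=B_1(G)$ with norm at most $1$.

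Next I would invoke Lemma \ref{Lem_Btheta_Md} with $\theta=1$: since $B_1(G)\hookrightarrow M_d(G)$ is a contraction, we get $\|\varphi\|_{M_d(G)}\leq\|\varphi\|_{B(G)}\leq 1<C$ for any $C>1$. It then only remains to check the uniform approximation condition on $K$: for $s\in K$,
\begin{align*}
|\varphi(s)-1|=\frac{\mu(U)-\mu(sU\cap U)}{\mu(U)}=\frac{\mu(U\setminus sU)}{\mu(U)}\leq\frac{\mu(sU\Delta U)}{\mu(U)}<\varepsilon.
\end{align*}
Thus condition (iii) of Proposition \ref{Prop_Mdwa_ucc} is satisfied with the given $C>1$, and since $C>1$ was arbitrary we conclude $G$ is $M_d$-weakly amenable with $\boldsymbol\Lambda(G,d)\leq 1$, hence $\boldsymbol\Lambda(G,d)=1$ (the lower bound $\boldsymbol\Lambda(G,d)\geq 1$ being part of the definition). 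One should take a tiny bit of care with $\varphi\in C_c(G)$ versus $\varphi\in A(G)$: Proposition \ref{Prop_Mdwa_ucc} already tells us (ii) and (iii) are equivalent, so producing $\varphi\in A(G)$ suffices, but in fact the $\varphi$ above is already continuous and compactly supported.

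There is no serious obstacle here; the only point requiring minimal attention is verifying that $s\mapsto\mu(sU\cap U)$ is continuous (so that $\varphi\in C_b(G)$ in the first place), which follows from strong continuity of $\lambda$ on $L^2(G)$, and that $\varphi$ is indeed a coefficient of the \emph{unitary} representation $\lambda$ so that the $B_1$-norm bound applies. Everything else is a direct transcription of the $d=2$ argument, with Lemma \ref{Lem_Btheta_Md} supplying the passage from Herz–Schur multipliers to $M_d$-multipliers at no cost in the norm since $\theta=1$.
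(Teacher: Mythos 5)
Your argument is correct and coincides with the paper's own proof: both take a Følner set $U$, form the normalised coefficient $\varphi(s)=\mu(sU\cap U)/\mu(U)$ of the left regular representation, bound $\|\varphi\|_{M_d(G)}\leq 1$ via Lemma \ref{Lem_Btheta_Md} with $\theta=1$, and conclude through Proposition \ref{Prop_Mdwa_ucc}. No changes needed.
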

\begin{proof}
Let us fix an integer $d\geq 2$, a compact subset $K\subseteq G$, and $\varepsilon>0$. Since $G$ is amenable, there is a compact, measurable subset $U\subseteq G$ with $0<\mu(U)<\infty$ such that, for all $s\in K$,
\begin{align*}
\frac{\mu(sU\Delta U)}{\mu(U)}<\varepsilon.
\end{align*}
Let $\lambda:G\to\mathbf{U}(L^2(G,\mu))$ be the left regular representation:
\begin{align*}
\lambda(s)f(t)=f(s^{-1}t).
\end{align*}
Let
\begin{align*}
\xi=\frac{1}{\mu(U)^{1/2}}\mathds{1}_U,
\end{align*}
where $\mathds{1}_U$ denotes the indicator function of the set $U$. Observe that $\xi$ is a unit vector in $L^2(G,\mu)$, and define, for every $s\in G$,
\begin{align*}
\varphi(s)=\langle\lambda(s)\xi,\xi\rangle=\frac{\mu(sU\cap U)}{\mu(U)}.
\end{align*}
Since $\lambda$ is a unitary representation, $\varphi$ is an element of $M_d(G)$ of norm at most $1$; see Lemma \ref{Lem_Btheta_Md}. Moreover, since $U$ is compact, $\varphi$ also belongs to $C_c(G)$. Furthermore, for every $s\in K$,
\begin{align*}
|1-\varphi(s)| & = \frac{\mu(U)-\mu(sU\cap U)}{\mu(U)}\\
& \leq \frac{\mu(sU\cup U)-\mu(sU\cap U)}{\mu(U)}\\
& = \frac{\mu(sU\Delta U)}{\mu(U)}\\
& < \varepsilon.
\end{align*}
By Proposition \ref{Prop_Mdwa_ucc}, we conclude that $G$ is $M_d$-weakly amenable with $\boldsymbol\Lambda(G,d)=1$.
\end{proof}

\subsection{Quotients}
We will also need the fact that the constants $\boldsymbol\Lambda(G,d)$ are stable under taking quotients by a compact subgroup.

\begin{lem}\label{Lem_Mdwa_G/K}
	Let $G$ be a locally compact group, $K$ a compact, normal subgroup of $G$, and $d\geq 2$. Then $G$ is $M_d$-weakly amenable if and only if $G/K$ is $M_d$-weakly amenable. Moreover,
	\begin{align*}
		\boldsymbol\Lambda(G,d)=\boldsymbol\Lambda(G/K,d).
	\end{align*}
\end{lem}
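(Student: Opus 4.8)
The plan is to exploit the two natural maps relating $M_d(G)$ and $M_d(G/K)$: pullback along the quotient map $q\colon G\to G/K$, and averaging over $K$. For the "if" direction, suppose $G/K$ is $M_d$-weakly amenable. As already observed in the proof of Theorem \ref{Thm_MdAP_ext}, the map $\Theta\colon M_d(G/K)\to M_d(G)$, $\Theta(\psi)=\psi\circ q$, is a contraction (if $\psi$ decomposes via $\xi_i\colon G/K\to\mathbf{B}(\mathcal{H}_i,\mathcal{H}_{i-1})$, then $\xi_i\circ q$ decomposes $\psi\circ q$ with the same norms). Given a compact $K_0\subseteq G$ and $\varepsilon>0$, the set $q(K_0)\subseteq G/K$ is compact, so by Proposition \ref{Prop_Mdwa_ucc} there is $\psi\in C_c(G/K)$ with $\|\psi\|_{M_d(G/K)}<C$ and $\sup_{q(K_0)}|\psi-1|<\varepsilon$; then $\varphi=\psi\circ q$ lies in $C_c(G)$ (its support is contained in $q^{-1}(\operatorname{supp}\psi)$, which is compact since $K$ is compact), satisfies $\|\varphi\|_{M_d(G)}<C$, and $\sup_{K_0}|\varphi-1|<\varepsilon$. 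By Proposition \ref{Prop_Mdwa_ucc} again, $G$ is $M_d$-weakly amenable with $\boldsymbol\Lambda(G,d)\le\boldsymbol\Lambda(G/K,d)$.

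For the converse, suppose $G$ is $M_d$-weakly amenable and fix a compact $L\subseteq G/K$ and $\varepsilon>0$. Pick a compact $L_0\subseteq G$ with $q(L_0)=L$ (for instance a compact set surjecting onto $L$, using that $q$ is open and $K$ compact), and apply Proposition \ref{Prop_Mdwa_ucc} to get $\varphi\in C_c(G)$ with $\|\varphi\|_{M_d(G)}<C$ and $\sup_{L_0K}|\varphi-1|<\varepsilon$; here we enlarge $L_0$ to $L_0K$, still compact, so that $\varphi$ is close to $1$ on entire $K$-cosets meeting $L_0$. Now average: let $m_K$ be the normalised Haar measure on $K$ and set
\begin{align*}
\bar\varphi(s)=\int_K\int_K\varphi(k s k')\,dm_K(k)\,dm_K(k'),
\end{align*}
which is $K$-bi-invariant, hence descends to a function $\psi$ on $K\backslash G/K=G/K$ (using normality of $K$). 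The key point is that $M_d$-norm does not increase under this averaging: writing $\varphi(s_1\cdots s_d)=\xi_1(s_1)\cdots\xi_d(s_d)1$, one obtains a decomposition of $\bar\varphi$ by replacing $\xi_1$ with $s\mapsto\int_K\xi_1(ks)\,dm_K(k)$ (a map into $\mathbf{B}(\mathcal{H}_1,\C)$ of norm $\le\sup\|\xi_1\|$, since the integrand has norm $\le\sup\|\xi_1\|$ pointwise and integration against a probability measure is norm-contractive) and $\xi_d$ with $s\mapsto\int_K\xi_d(sk')\,dm_K(k')$; then $\bar\varphi(s_1\cdots s_d)$ equals the product of these averaged maps, so $\|\bar\varphi\|_{M_d(G)}\le\|\varphi\|_{M_d(G)}<C$, and consequently $\|\psi\|_{M_d(G/K)}\le\|\bar\varphi\|_{M_d(G)}<C$ because $\bar\varphi=\psi\circ q$ and $\Theta$ is isometric onto its image (its restriction to $K$-bi-invariant functions is norm-preserving — indeed any decomposition of $\psi\circ q$ pushes forward to one of $\psi$ with the same norms). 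The function $\psi$ is compactly supported (its support is the image of the compact $\operatorname{supp}\bar\varphi$), and for $x=q(s)\in L$ with $s\in L_0$ we have $\bar\varphi(s)=\int\int\varphi(ksk')$, and each $ksk'\in KL_0K=L_0K$ (by normality), so $|\bar\varphi(s)-1|\le\int\int|\varphi(ksk')-1|<\varepsilon$, i.e. $\sup_L|\psi-1|<\varepsilon$. By Proposition \ref{Prop_Mdwa_ucc}, $G/K$ is $M_d$-weakly amenable with $\boldsymbol\Lambda(G/K,d)\le\boldsymbol\Lambda(G,d)$. Combining the two inequalities gives equality.

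The main obstacle is making the averaging argument rigorous at the level of the operator-valued maps $\xi_i$: one must check that $s\mapsto\int_K\xi_1(ks)\,dm_K(k)$ is well defined as a weak (Bochner or weak-$*$) integral into $\mathbf{B}(\mathcal{H}_1,\C)=\mathcal{H}_1^*$, that the product formula $\bar\varphi(s_1\cdots s_d)=\big(\int_K\xi_1(ks_1)\,dm_K(k)\big)\xi_2(s_2)\cdots\big(\int_K\xi_d(s_d k')\,dm_K(k')\big)1$ holds (this is a Fubini-type interchange, valid since the integrands are uniformly bounded and $m_K$ is finite), and that the norm bound survives. A minor additional point is the choice of $L_0$ with $q(L_0)=L$ compact, which follows from standard facts about quotients by compact subgroups; none of these is deep, but the bookkeeping with the two-sided averaging and normality of $K$ should be done carefully.
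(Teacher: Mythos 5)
Your proposal is correct and takes essentially the same route as the paper: pull back along $q$ for the inequality $\boldsymbol\Lambda(G,d)\leq\boldsymbol\Lambda(G/K,d)$ (compact supports are preserved because $K$ is compact), and average over $K$ for the reverse inequality. The only differences are that the paper averages on one side only, $\tilde\psi(s)=\int_K\psi(sk)\,dk$ (normality of $K$ already makes this constant on cosets), and obtains the bound $\|\tilde\psi\|_{M_d(G)}\leq\|\psi\|_{M_d(G)}$ by viewing $\tilde\psi$ as a weak* integral of right translates of $\psi$, which acts isometrically on $M_d(G)$ --- this sidesteps the measurability concern you raise for the operator-valued maps $\xi_i$, since one only needs $k\mapsto\langle \psi(\cdot\,k),g\rangle$ to be measurable for $g\in L^1(G)$.
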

\begin{proof}
	Let $q:G\to G/K$ denote the quotient map. If $(\varphi_i)$ is an approximate identity in $M_d(G/K)$, then $(\varphi_i\circ q)$ is an approximate identity in $M_d(G)$ with
	\begin{align*}
		\|\varphi_i\circ q\|_{M_d(G)} \leq 	\|\varphi_i\|_{M_d(G/K)}.
	\end{align*}
	Moreover, if $\varphi_i$ is compactly supported, so is $\varphi_i\circ q$ because $K$ is compact. This shows that $\boldsymbol\Lambda(G,d)\leq\boldsymbol\Lambda(G/K,d)$. Now let $(\psi_i)$ be an approximate identity in $M_d(G)$, and define
	\begin{align*}
		\tilde{\psi}_i(s)=\int_K\psi_i(sk)\,dk
	\end{align*}
	for all $s\in G$, where $dk$ stands for the integration with respect to the normalised Haar measure on $K$. Using the fact that $G$ acts isometrically on $M_d(G)$ by right translations, one checks that
	\begin{align*}
		\|\tilde{\psi}_i\|_{M_d(G)} \leq 	\|\psi_i\|_{M_d(G)}.
	\end{align*}
	Moreover, if $\psi_i$ is compactly supported, so is $\tilde{\psi}_i$ because $K$ is compact. Finally, since $\tilde{\psi}_i$ is constant on each coset $sK$, it may be viewed as an element of $M_d(G/K)$. Again, by the compactness of $K$, $\tilde{\psi}_i$ is compactly supported on $G/K$ if it is compactly supported on $G$. This shows that $\boldsymbol\Lambda(G/K,d)\leq\boldsymbol\Lambda(G,d)$.
\end{proof}

\section{Baumslag--Solitar groups}\label{S_BS_gr}

In this section, we focus un Baumslag--Solitar groups and the proof of Theorem \ref{Thm_BS}, which relies on a construction of analytic families of uniformly bounded representations from \cite{Szw}. Let $\Omega$ be an open subset of $\C$, $G$ a group, and $\mathcal{H}$ a Hilbert space. For each $z\in\Omega$, let $\pi_z:G\to\mathbf{B}(\mathcal{H})$ be a representation. We say that the family $(\pi_z)_{z\in\Omega}$ is analytic if the map
\begin{align*}
	z\in\Omega\ \longmapsto\ \pi_z(t)\in\mathbf{B}(\mathcal{H})
\end{align*}
is holomorphic for each $t\in G$; see \cite[\S 3.3]{deCHaa} for different characterisations of Banach space valued holomorphic functions.

The following result is essentially an adaptation of \cite{Val} to our setting. It had already appeared in \cite[Proposition 3.2]{Ver} in the context of discrete groups, but here we will need to extend it to locally compact groups. We let $\mathbb{D}$ denote the open unit disk in $\C$. Recall that a function $\phi:G\to\N$ is proper if $\phi^{-1}(\{n\})$ is relatively compact for each $n\in\N$.

\begin{prop}\label{Prop_an_ub_rep}
Let $G$ be a locally compact group endowed with a proper, continuous function $l:G\to\N$ satisfying $l(e)=0$, where $e$ is the identity element of $G$. Assume that there is an analytic family of uniformly bounded representations $(\pi_z)_{z\in\D}$ of $G$ on a Hilbert space $\mathcal{H}$ such that $\pi_r$ is unitary for $r\in(0,1)$, $z\mapsto|\pi_z|$ is bounded on compact subsets of $\D$, and there is $\xi\in\mathcal{H}$ satisfying
\begin{align*}
z^{l(s)}=\langle\pi_z(s)\xi,\xi\rangle
\end{align*}
for all $z\in\D$, $s\in G$. Then $G$ is $M_d$-weakly amenable with $\boldsymbol\Lambda(G,d)=1$ for all $d\geq 2$.
\end{prop}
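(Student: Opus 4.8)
The plan is to build, for each compact set $K\subseteq G$ and each $\varepsilon>0$, a function $\varphi\in C_c(G)$ with $\|\varphi\|_{M_d(G)}<1+\varepsilon$ that is uniformly close to $1$ on $K$, and then invoke Proposition \ref{Prop_Mdwa_ucc}. The natural candidates are the functions $\varphi_N(s)=z^{l(s)}$ for $z\in(0,1)$ real, possibly cut off using the properness of $l$. First I would record that, since $\pi_r$ is unitary for $r\in(0,1)$, the function $s\mapsto r^{l(s)}=\langle\pi_r(s)\xi,\xi\rangle$ lies in $B(G)$ with $B(G)$-norm at most $\|\xi\|^2$; actually evaluating at $s=e$ gives $1=\langle\xi,\xi\rangle=\|\xi\|^2$, so in fact $\|r^{l(\cdot)}\|_{B(G)}\le 1$, and hence by Lemma \ref{Lem_Btheta_Md} (with $\theta=1$) it lies in $M_d(G)$ with norm at most $1$. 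As $r\uparrow 1$, $r^{l(s)}\to 1$ uniformly on $K$ because $l$ is bounded on the compact (hence relatively compact) set $K$ — here properness and continuity of $l$ guarantee $l(K)$ is a finite subset of $\N$. So far this gives an approximate identity bounded by $1$ in $M_d(G)$, but its members need not be compactly supported.

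The second step is to fix the compact support issue. The function $r^{l(\cdot)}$ need not vanish off a compact set, so I would truncate. Properness of $l$ means $l^{-1}(\{0,1,\dots,n\})$ is relatively compact; I would like to multiply $r^{l(\cdot)}$ by something supported on such a set without inflating the $M_d$-norm beyond $1+\varepsilon$. The cleanest route is to use analyticity of the family $(\pi_z)_{z\in\D}$: for $|z|<1$ but $z$ not real, $\pi_z$ is merely uniformly bounded, and $s\mapsto z^{l(s)}=\langle\pi_z(s)\xi,\xi\rangle\in B_\theta(G)$ with $\theta=|\pi_z|$, so by Lemma \ref{Lem_Btheta_Md} its $M_d$-norm is at most $|\pi_z|^d$. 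Following Valette's argument (as in \cite[Proposition 3.2]{Ver}), one integrates the holomorphic function $z\mapsto z^{l(s)}$ against a suitable probability measure on a circle $|z|=r$ — or more precisely takes a Fejér-type average of the ``Taylor coefficients'' — to produce a function of the form $\sum_{n=0}^{N}c_n \mathds{1}_{\{l(s)=n\}}$ with $\sum|c_n|$ controlled and with $M_d$-norm close to $1$, using that for $r$ close to $1$ the bound on $|\pi_z|$ over $|z|=r$ is close to $1$ by continuity of $z\mapsto|\pi_z|$ at points of $(0,1)$ together with the boundedness on compacta. This finite sum is supported on the relatively compact set $l^{-1}(\{0,\dots,N\})$, hence lies in $C_c(G)$ after noting $l$ is continuous so the level sets are clopen; and by taking $N$ large and $r$ close to $1$ it is uniformly close to $1$ on $K$.

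A cleaner alternative, which I would present if the measure-theoretic bookkeeping on the circle gets heavy, is: apply \cite[Proposition 3.2]{Ver}-style reasoning directly to get, for each $\varepsilon>0$, a sequence of compactly supported $M_2$-multipliers, but then observe that the analytic-family input is exactly the hypothesis of the known locally compact statement, so the only new content over \cite[Proposition 3.2]{Ver} is replacing $M_2$ by $M_d$, which is handled uniformly by Lemma \ref{Lem_Btheta_Md}: a coefficient of a $\theta$-uniformly-bounded representation sits in $M_d(G)$ with norm at most $\theta^d$, and $\theta^d\to 1$ as $\theta\to 1$. Thus every estimate $\|\cdot\|_{M_2}\le\theta^2$ in the $d=2$ proof becomes $\|\cdot\|_{M_d}\le\theta^d$, and since the construction drives $\theta\to 1$ this costs nothing in the limit. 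One then concludes via Proposition \ref{Prop_Mdwa_ucc} that $\boldsymbol\Lambda(G,d)<1+\varepsilon$ for every $\varepsilon>0$, i.e. $\boldsymbol\Lambda(G,d)=1$.

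The main obstacle I anticipate is the passage from the non-compactly-supported $r^{l(\cdot)}$ to a genuinely compactly supported multiplier while keeping the $M_d$-norm below $1+\varepsilon$. One cannot simply multiply by an indicator of a level set, since indicators of infinite sets are generally not $M_d$-multipliers of norm close to $1$; the Valette trick of averaging the holomorphic family on a circle $|z|=r<1$ is what makes the truncation ``soft'' and norm-controlled, and verifying that the resulting Fejér/Poisson-type averaging kernel has total mass $1$ and that the uniformly-bounded-norm contribution $|\pi_z|^d$ integrates to something $\le(1+\varepsilon)$ for $r$ near $1$ is the technical heart. The properness and continuity of $l$ are used exactly to ensure (a) the truncated function is in $C_c(G)$ and (b) $l$ is bounded on $K$ so that convergence to $1$ is uniform there.
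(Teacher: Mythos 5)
Your overall strategy is the paper's: set $\psi_z(s)=z^{l(s)}$, use that $\psi_r$ is a coefficient of the unitary $\pi_r$ with unit vector $\xi$ (so $\|\psi_r\|_{M_d(G)}\leq 1$ and $\psi_r\to 1$ uniformly on compacta as $r\to 1$, since $l(K)$ is finite), and repair the lack of compact support by a Fej\'er average of the holomorphic family over the circle $|z|=r$, which produces exactly the function $\Phi_{N,r}(s)=\bigl(1-\tfrac{l(s)}{N+1}\bigr)r^{l(s)}\mathds{1}_{\{l(s)\leq N\}}\in C_c(G)$. That much is right, and it is what the paper does.

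The gap is in what you call the technical heart: the norm control of the truncation. You propose to bound $\|\Phi_{N,r}\|_{M_d(G)}$ by integrating $|\pi_z|^d$ over $|z|=r$ and claim this is $\leq 1+\varepsilon$ because ``the bound on $|\pi_z|$ over $|z|=r$ is close to $1$'' for $r$ near $1$. This is false: the hypotheses only give unitarity of $\pi_z$ at the \emph{real} points of that circle and boundedness of $|\pi_z|$ on compacta, and nothing forces $|\pi_z|$ to be small at, say, $z=\ii r$. In the very application this proposition is built for (Corollary \ref{Cor_Aut(T)}), Szwarc's bound gives $|\pi_{\ii r}|\leq 2(1+r^2)/(1-r)$, and indeed $\sup_{|z|=r}|\pi_z|$ blows up as $r\to 1$, so an estimate of the form $\sup_{|z|=r}\|\psi_z\|_{M_d(G)}\leq 1+\varepsilon$ is unavailable. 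The correct mechanism, and the one the paper uses, is that the local boundedness of $|\pi_z|$ serves only to make $z\mapsto\psi_z$ a holomorphic (hence continuous) $M_d(G)$-valued map on $\D$; then, for \emph{fixed} $r\in(0,1)$, the nonnegative Fej\'er kernel concentrates at $\theta=0$ as $N\to\infty$, so
\begin{align*}
\big\|\Phi_{N,r}-\psi_r\big\|_{M_d(G)}\leq\frac{1}{2\pi}\int_0^{2\pi}F_N\big(e^{\ii\theta}\big)\big\|\psi_{re^{\ii\theta}}-\psi_r\big\|_{M_d(G)}\,d\theta\ \xrightarrow[N\to\infty]{}\ 0,
\end{align*}
and the bound $\|\Phi_{N,r}\|_{M_d(G)}\leq 1+\varepsilon$ is inherited from $\|\psi_r\|_{M_d(G)}\leq 1$, not from any control of $|\pi_z|$ off the real axis. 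So the parameter doing the work in the norm estimate is $N\to\infty$ at fixed $r$ (with $r\to 1$ used afterwards only for uniform convergence to $1$ on $K$); as written, your argument would fail at exactly the step you identify as crucial.
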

\begin{proof}
Fix $d\geq 2$ and define $\psi_z:G\to\C$ by
\begin{align*}
\psi_z(s)=z^{l(s)}
\end{align*}
for all $z\in\D$, $s\in G$. Then $z\mapsto\psi_z$ defines a holomorphic map from $\D$ to $M_d(G)$; see \cite[Lemma 3.1]{Ver}. We consider the F\'ejer kernel $F_N:\mathrm{S}^1\to\R$, defined on the unit circle $\mathrm{S}^1\subset\C$ by
\begin{align*}
F_N(z)=\sum_{|n|\leq N}\left(1-\frac{|n|}{N+1}\right)z^n
\end{align*}
for all $N\in\N$, $z\in \mathrm{S}^1$. Then $F_N\geq 0$ and, for every continuous function $f\in C(\mathrm{S}^1)$,
\begin{align*}
\lim_{N\to\infty}\frac{1}{2\pi}\int_0^{2\pi}F_N\big(e^{\ii\theta}\big)f\big(e^{\ii\theta}\big)\, d\theta = f(1);
\end{align*}
see \cite[Example 1.2.18]{Gra} for details. We define, for every $r\in(0,1)$ and $N\in\N$,
\begin{align*}
\Phi_{N,r}=\frac{1}{2\pi}\int_0^{2\pi}F_N\big(e^{\ii\theta}\big)\psi_{re^{\ii\theta}}\, d\theta.
\end{align*}
Observe that $\Phi_{N,r}$ belongs to $M_d(G)$. Moreover, for all $r\in(0,1)$,
\begin{align*}
\big\|\Phi_{N,r}-\psi_r\big\|_{M_d(G)}
&= \frac{1}{2\pi}\left\|\int_0^{2\pi}F_N\big(e^{\ii\theta}\big)\big(\psi_{re^{\ii\theta}}-\psi_r\big)\, d\theta\right\|_{M_d(G)}\\
&\leq \frac{1}{2\pi}\int_0^{2\pi}F_N\big(e^{\ii\theta}\big)\big\|\psi_{re^{\ii\theta}}-\psi_r\big\|_{M_d(G)}\, d\theta\\
&\quad \xrightarrow[N\to\infty]{} 0.
\end{align*}
In particular,
\begin{align*}
\lim_{N\to\infty}\big\|\Phi_{N,r}-\psi_r\big\|_\infty=0,
\end{align*}
and therefore
\begin{align*}
\lim_{r\to 1}\lim_{N\to\infty}\Phi_{N,r} = 1
\end{align*}
uniformly on compact subsets of $G$ because $l$ is proper. On the other hand, for every $s\in G$,
\begin{align*}
\Phi_{N,r}(s) &=\frac{1}{2\pi}\sum_{|n|\leq N}\left(1-\frac{|n|}{N+1}\right)\int_0^{2\pi}e^{\ii\theta n}r^{l(s)}e^{\ii\theta l(s)}\, d\theta\\
&= \begin{cases}
\left(1-\frac{l(s)}{N+1}\right)r^{l(s)},& \text{if } l(s)\leq N,\\
0,& \text{otherwise.}
\end{cases}
\end{align*}
This shows that $\Phi_{N,r}$ belongs to $C_c(G)$ because $l$ is continuous and proper. By Proposition \ref{Prop_Mdwa_ucc}, $G$ is $M_d$-weakly amenable with $\boldsymbol\Lambda(G,d)=1$.
\end{proof}

We will now apply this result to the automorphism group of a tree. Let $T$ be a locally finite tree, and let $\operatorname{Aut}(T)$ denote its automorphism group. For each $g\in\operatorname{Aut}(T)$, and each finite subset of vertices $S$ of $T$, we define
\begin{align*}
	U(g,S)=\left\{h\in\operatorname{Aut}(T)\ \mid\ \forall x\in S,\ h(x)=g(x)\right\},
\end{align*}
and we endow $\operatorname{Aut}(T)$ with the topology generated by all the subsets $U(g,S)$. With this topology, $\operatorname{Aut}(T)$ becomes a (totally disconnected) locally compact group. Moreover, if $d$ denotes the distance on $T$, and $x$ is any vertex, the function
\begin{align*}
	g\in\operatorname{Aut}(T)\ \longmapsto\ d(g(x),x)
\end{align*}
is continuous and proper.

\begin{cor}\label{Cor_Aut(T)}
Let $T$ be a locally finite tree, and $G=\operatorname{Aut}(T)$. Then $G$ is $M_d$-weakly amenable with $\boldsymbol\Lambda(G,d)=1$ for all $d\geq 2$.
\end{cor}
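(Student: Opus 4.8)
The plan is to apply Proposition \ref{Prop_an_ub_rep} with $l(g)=d(g(x_0),x_0)$ for a fixed base vertex $x_0$, so the main task is to produce an analytic family $(\pi_z)_{z\in\D}$ of uniformly bounded representations of $\operatorname{Aut}(T)$ on a fixed Hilbert space, unitary for $z\in(0,1)$, locally bounded in $z$, and admitting a vector $\xi$ with $\langle\pi_z(g)\xi,\xi\rangle=z^{d(g(x_0),x_0)}$. Such a family is exactly what is constructed in the work on the Haagerup property for $\operatorname{Aut}(T)$; the classical reference is the construction of uniformly bounded representations on trees via the boundary, going back to Pytlik--Szwarc and Valette, and the discrete-group version already appears in \cite{Ver} (Proposition 3.2 of that paper, which the present Proposition \ref{Prop_an_ub_rep} is designed to extend to the locally compact setting). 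So the first step is simply to recall/cite this construction and verify that all hypotheses of Proposition \ref{Prop_an_ub_rep} hold in the topological-group setting.

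The concrete realisation I would use: let $V$ be the vertex set of $T$, fix $x_0\in V$, and for each vertex $v\neq x_0$ let $v^-$ denote its neighbour on the geodesic $[x_0,v]$. On $\mathcal H=\ell^2(V)$ define, for $z\in\D$, the operator $\pi_z(g)$ by declaring $\pi_z(g)$ to act in a way that intertwines the natural permutation action of $g$ on $V$ with a $z$-dependent ``weight'' along geodesics; concretely one builds $\pi_z(g)$ out of the unitary permutation $\lambda(g)\colon\delta_v\mapsto\delta_{gv}$ together with a rank-correcting operator so that $\langle\pi_z(g)\delta_{x_0},\delta_{x_0}\rangle=z^{d(gx_0,x_0)}$. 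Since $g$ acts on $T$ by isometries fixing the combinatorial structure, $d(gx_0,x_0)=d(x_0,g^{-1}x_0)$ and the cocycle-type identities needed to make $z\mapsto\pi_z$ multiplicative hold automatically. Analyticity in $z$ is clear because $\pi_z(g)$ depends polynomially (in fact through finitely many powers of $z$, for each fixed $g$) on $z$; uniform boundedness with $|\pi_z|\leq C(1-|z|)^{-1}$ or a similar bound on compact subsets of $\D$ is the content of the Pytlik--Szwarc estimate; and unitarity for real $z\in(0,1)$ follows from the positive-definiteness of $g\mapsto r^{d(gx_0,x_0)}$ on $\operatorname{Aut}(T)$ (Haagerup's inequality for trees).

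The only genuinely new point compared with \cite{Ver} is continuity: one must check that for each fixed $\xi\in\mathcal H$ the map $g\mapsto\pi_z(g)\xi$ is continuous for the permutation topology on $\operatorname{Aut}(T)$ described above. This is straightforward: the topology on $\operatorname{Aut}(T)$ is the topology of pointwise convergence on $V$, so if $g_n\to g$ then $g_n$ and $g$ eventually agree on any finite ball around $x_0$; since $\pi_z(g)\delta_v$ is supported on (and determined by) the geodesic $[x_0,gv]$, the vectors $\pi_z(g_n)\delta_v$ converge to $\pi_z(g)\delta_v$ in $\ell^2(V)$, and a density/uniform-boundedness argument extends this to all $\xi$. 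Once strong continuity is in hand, Proposition \ref{Prop_an_ub_rep} applies verbatim with $l(g)=d(g(x_0),x_0)$ (which is proper, continuous, and vanishes at $e$ as noted in the excerpt), giving $\boldsymbol\Lambda(\operatorname{Aut}(T),d)=1$ for all $d\geq2$. I expect the main obstacle, such as it is, to be bookkeeping: writing down the family $(\pi_z)$ cleanly enough that analyticity, the coefficient formula, and local boundedness of $z\mapsto|\pi_z|$ are all transparent; there is no deep difficulty, only the need to transcribe a known discrete construction to the locally compact tree-automorphism group and verify continuity.
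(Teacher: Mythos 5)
Your proposal is correct and follows essentially the same route as the paper: fix a base vertex, invoke the Pytlik--Szwarc--Valette analytic family of uniformly bounded representations on $\ell^2$ of the vertex set with coefficient $z^{d(gx,x)}$, and feed it into Proposition \ref{Prop_an_ub_rep}. The paper simply cites Szwarc's theorem, which already supplies this family for the locally compact group $\operatorname{Aut}(T)$ (including the continuity and the bound $|\pi_z|\leq 2|1-z^2|/(1-|z|)$), so the construction and continuity check you sketch are absorbed into that reference.
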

\begin{proof}
Let us fix a vertex $x\in T$, and let $\delta_x$ denote the delta function on $x$, viewed as an element of $\ell^2(T)$. By \cite[Theorem 1]{Szw}, there is an analytic family of uniformly bounded representations $(\pi_z)_{z\in\D}$ of $G$ on $\ell^2(T)$ such that, for all $z\in\D$, $g\in G$,
\begin{align*}
\langle\pi_z(g)\delta_x,\delta_x\rangle=z^{d(g(x),x)},
\end{align*}
\begin{align*}
|\pi_z|\leq 2\frac{|1-z^2|}{1-|z|},
\end{align*}
and $\pi_r$ is unitary for $r\in(-1,1)$; see also \cite{Val2}. By Proposition \ref{Prop_an_ub_rep}, $G$ is $M_d$-weakly amenable with $\boldsymbol\Lambda(G,d)=1$ for all $d\geq 2$.
\end{proof}

With all this, we can prove Theorem \ref{Thm_BS}.

\begin{proof}[Proof of Theorem \ref{Thm_BS}]
Let $G=\operatorname{BS}(m,n)$ and $d\geq 2$. Let us consider the semidirect product $\Z\ltimes_{\frac{n}{m}}\R$, where the action of $\Z$ on $\R$ is given by multiplication by $\frac{n}{m}$. Let $T$ be the Bass--Serre tree of $G$, viewed as an HNN extension; see \cite[\S 4.4]{Yan} for details. Then $T$ is the $(|m|+|n|)$-regular tree; see \cite[Theorem 4.10]{Yan}. By \cite[Theorem 1]{GalJan}, $G$ can be realised as a closed subgroup of the locally compact group $\big(\Z\ltimes_{\frac{n}{m}}\R\big)\times\operatorname{Aut}(T)$. On the other hand, since $\Z\ltimes_{\frac{n}{m}}\R$ is amenable, by Lemma \ref{Lem_am->Md-wa}, we have $\boldsymbol\Lambda\big(\Z\ltimes_{\frac{n}{m}}\R,d\big)=1$. Moreover, by Corollary \ref{Cor_Aut(T)}, $\boldsymbol\Lambda(\operatorname{Aut}(T),d)=1$. Hence, by Lemma \ref{Lem_Mdwa_prod}, $\big(\Z\ltimes_{\frac{n}{m}}\R\big)\times\operatorname{Aut}(T)$ is $M_d$-weakly amenable with
\begin{align*}
\boldsymbol\Lambda\big(\big(\Z\ltimes_{\frac{n}{m}}\R\big)\times\operatorname{Aut}(T),d\big)
\leq \boldsymbol\Lambda\big(\Z\ltimes_{\frac{n}{m}}\R,d\big) \boldsymbol\Lambda(\operatorname{Aut}(T),d)=1.
\end{align*}
This shows that $\boldsymbol\Lambda\big(\big(\Z\ltimes_{\frac{n}{m}}\R\big)\times\operatorname{Aut}(T),d\big)=1$. Finally, by \cite[Corollary 0.6]{Bat2}, $G$ is $M_d$-weakly amenable with $\boldsymbol\Lambda(G,d)=1$ because it is a closed subgroup of $\big(\Z\ltimes_{\frac{n}{m}}\R\big)\times\operatorname{Aut}(T)$.
\end{proof}

\section{Simple Lie groups with finite centre}\label{S_Lie_gr}

This section is devoted to the proof of Theorem \ref{Thm_Lie_gr}. We first recall the notion of real rank for simple Lie groups; for more details, we refer the reader to \cite{Hel2,Kna2}. Let $G$ be a simple Lie group, and let $\mathfrak{g}$ denote its Lie algebra. The Cartan decomposition of $\mathfrak{g}$ is given by
\begin{align*}
	\mathfrak{g}=\mathfrak{k}+\mathfrak{p},
\end{align*}
where $\mathfrak{k}$ and $\mathfrak{p}$ are the eigenspaces for the Cartan involution $\theta:\mathfrak{g}\to\mathfrak{g}$, associated to the eigenvalues $1$ and $-1$ respectively; see \cite[\S VI.2]{Kna2} for details. The real rank of $G$ --denoted by $\operatorname{rank}_\R G$-- is defined as the dimension of a maximal abelian subspace of $\mathfrak{p}$. For simple Lie groups, weak amenability and the exact value of the Cowling--Haagerup constant are completely determined by their real rank and their local isomorphism class; see \cite[\S 5]{Ver3} and the references therein. If $\operatorname{rank}_\R G\geq 2$, then $G$ is not weakly amenable. In particular, $\boldsymbol\Lambda(G,d)=\infty$ for all $d\geq 2$. If $\operatorname{rank}_\R G=0$, then $G$ is compact and therefore amenable. By Lemma \ref{Lem_am->Md-wa}, $\boldsymbol\Lambda(G,d)=1$ for all $d\geq 2$. Hence, the only case that requires a deeper analysis is when $\operatorname{rank}_\R G=1$.

We say that two Lie groups $G,H$ are locally isomorphic if their Lie algebras are isomorphic. In this case, we write $G\approx H$. As a consequence of the classification of simple real Lie algebras (see e.g. \cite[Theorem 6.105]{Kna2}), every connected simple Lie group of real rank 1 is locally isomorphic to either $\operatorname{F}_{4,-20}$, $\operatorname{SO}(n,1)$, $\operatorname{SU}(n,1)$ or $\operatorname{Sp}(n,1)$ ($n\geq 2$). Let us recall now the definitions of these four families of groups. Let $\R,\C,\mathbb{H}$ denote the real numbers, complex numbers, and quaternions respectively. For $n\geq 2$, we define
\begin{align*}
\operatorname{SO}(n,1) &= \left\{g\in\operatorname{SL}(n+1,\R)\ \mid\ g^*I_{n,1}g=I_{n,1}\right\},\\
\operatorname{SU}(n,1) &= \left\{g\in\operatorname{SL}(n+1,\C)\ \mid\ g^*I_{n,1}g=I_{n,1}\right\},\\
\operatorname{Sp}(n,1) &= \left\{g\in\operatorname{GL}(n+1,\mathbb{H})\ \mid\ g^*I_{n,1}g=I_{n,1}\right\},
\end{align*} 
where $g^*$ denotes the (conjugate) transpose of $g$, and $I_{n,1}$ is the diagonal matrix all whose non-zero entries are $1$, except for the last one, which is $-1$. The exceptional group $\operatorname{F}_{4,-20}$ is defined in similar fashion as the automorphism group of the hyperbolic plane over the octonions; see \cite{Val3} for details. The following result was proved in \cite{CowHaa}.

\begin{thm}[Cowling--Haagerup]\label{Thm_CH}
Let $G$ be a connected simple Lie group with finite centre and real rank $1$. Then $G$ is weakly amenable with
\begin{align*}
	\boldsymbol\Lambda(G)=\begin{cases}
		1 & \text{if }\ G\approx\operatorname{SO}(n,1),\ n\geq 2,\\
		1 & \text{if }\ G\approx\operatorname{SU}(n,1),\ n\geq 2,\\
		2n-1 & \text{if }\ G\approx\operatorname{Sp}(n,1),\ n\geq 2,\\
		21 & \text{if }\ G\approx\operatorname{F}_{4,-20}.
	\end{cases}
\end{align*}
\end{thm}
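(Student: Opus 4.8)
The plan is to follow the strategy of Cowling and Haagerup, reducing the computation of $\boldsymbol\Lambda(G)$ to the analysis of $K$-bi-invariant (radial) Herz--Schur multipliers on the rank-one symmetric space $G/K$. Fix a maximal compact subgroup $K\leq G$ and an Iwasawa decomposition $G=KAN$; since $\operatorname{rank}_\R G=1$, the subgroup $A$ is one-dimensional, and the Cartan decomposition $G=KAK$ identifies $K$-bi-invariant functions on $G$ with even functions of a single real variable $t$ recording the geodesic distance in $G/K$. The first step is to show that $\boldsymbol\Lambda(G)$ is computed by radial multipliers: given any approximate identity $(\varphi_i)\subset A(G)$ bounded in the Herz--Schur norm $\|\cdot\|_{M_2(G)}$, averaging each $\varphi_i$ over the compact group $K$ acting on both sides by translation produces a radial approximate identity of no larger norm, since $G$ acts isometrically on $M_2(G)$ by translations and $K$-averaging is a contraction (this is the same mechanism as in Lemma \ref{Lem_Mdwa_G/K}). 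Thus one reduces to radial functions, for which the Herz--Schur norm coincides with the completely bounded multiplier norm.

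The second step is to obtain a usable description of the completely bounded norm of a radial function in terms of its profile $t\mapsto\dot\varphi(t)$. Here the relevant harmonic analysis is that of Jacobi functions: the elementary spherical functions on $G/K$ are Jacobi functions whose parameters are fixed by the root multiplicities $(m_\alpha,m_{2\alpha})$, which equal $(n-1,0)$, $(2(n-1),1)$, $(4(n-1),3)$ and $(8,7)$ for $\operatorname{SO}(n,1)$, $\operatorname{SU}(n,1)$, $\operatorname{Sp}(n,1)$ and $\operatorname{F}_{4,-20}$ respectively. I would express a radial function through its spherical (Jacobi) transform and use the positive-definite structure of coefficients of representations to characterise, and then sharply estimate, the cb-norm. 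This spherical-transform formula is the technical heart of the argument, and it is where the quantitative differences between the four families appear.

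For the upper bound I would construct, for each group, an analytic family of uniformly bounded representations $(\pi_z)$ obtained by analytic continuation of the spherical principal series, carrying a $K$-fixed unit vector $\xi$ whose diagonal coefficient $\langle\pi_z(\,\cdot\,)\xi,\xi\rangle$ is the Jacobi function $\phi_z$. As the spectral parameter $z$ approaches the edge of the strip of uniform boundedness, $\phi_z\to 1$ uniformly on compact subsets of $G$; one then shows, not via the crude bound $\|\phi_z\|_{cb}\leq|\pi_z|^2$ but by a direct estimate of the radial cb-norm from the second step, that $\|\phi_z\|_{cb}$ converges to the claimed constant $C_G$, equal to $1$ for $\operatorname{SO}(n,1),\operatorname{SU}(n,1)$, to $2n-1$ for $\operatorname{Sp}(n,1)$, and to $21$ for $\operatorname{F}_{4,-20}$. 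A Fejér-type averaging in the parameter $z$, exactly as in the proof of Proposition \ref{Prop_an_ub_rep}, replaces $\phi_z$ by compactly supported functions without increasing the norm, yielding an approximate identity in $A(G)$ bounded by $C_G$, so that $\boldsymbol\Lambda(G)\leq C_G$. For the two families with $C_G=1$ this recovers the de Cannière--Haagerup construction \cite{deCHaa}; the sharp norm control for $\operatorname{Sp}(n,1)$ and $\operatorname{F}_{4,-20}$ is the genuinely new input.

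The matching lower bound is the main obstacle. Here I would argue that no radial approximate identity for $1$ can be bounded in cb-norm by a constant smaller than $C_G$. The idea is to pair a putative approximate identity $(\varphi_i)$ against a distribution built from the Harish-Chandra $c$-function at the bottom of the $L^2$-spectrum: using the spherical Plancherel decomposition and the asymptotics of the elementary spherical function, the requirement $\varphi_i\to 1$ forces the Jacobi transforms of the $\varphi_i$ to concentrate their mass in a way that the radial cb-norm formula bounds $\|\varphi_i\|_{cb}$ below by $C_G-o(1)$. The precise constants $2n-1$ and $21$ are read off from the pole/residue structure of the $c$-function, which encodes $(m_\alpha,m_{2\alpha})$, and the octonionic case $\operatorname{F}_{4,-20}$ requires a separate computation rather than specialisation of a uniform formula. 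Controlling the transition between the continuous and discrete parts of the Plancherel measure, and matching the exponents coming from the root data, is the most delicate point; this is exactly the content of \cite{CowHaa}, which in the present paper is invoked as a black box, and the outline above indicates the route by which it would be established from scratch.
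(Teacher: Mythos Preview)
The paper does not prove this theorem at all: it is stated as a result of Cowling and Haagerup \cite{CowHaa} and used as a black box, with no argument given. So there is no ``paper's own proof'' to compare your proposal against; you have written a sketch of the original Cowling--Haagerup argument, and indeed you acknowledge this yourself in your final paragraph.

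As a sketch of \cite{CowHaa}, your outline is broadly faithful: the reduction to $K$-bi-invariant multipliers via averaging, the identification of spherical functions with Jacobi functions parametrised by the root multiplicities $(m_\alpha,m_{2\alpha})$, the upper bound via analytic continuation of the principal series, and the lower bound via asymptotics of spherical functions and the $c$-function are all correct ingredients. One caveat: the upper bound in \cite{CowHaa} is not obtained by a Fej\'er-type averaging of an analytic family as in Proposition~\ref{Prop_an_ub_rep}; rather, Cowling and Haagerup work directly with the spherical transform and an explicit formula for the cb-norm of radial multipliers (their Theorem~5.4), and the approximate identity is built from compactly supported functions whose transforms are controlled. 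Your description of the lower bound as coming from pairing against a distribution built from the $c$-function is also somewhat impressionistic; the actual mechanism in \cite{CowHaa} is an asymptotic expansion of the spherical function combined with the cb-norm formula, which forces any radial approximate identity to have limiting norm at least $C_G$. But for the purposes of this paper, all of this is beside the point: the statement is simply cited.
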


We will show that the same characterisation holds for $M_d$-weak amenability, although we are not able to compute the exact values of the constants $\boldsymbol\Lambda(\operatorname{Sp}(n,1),d)$  and $\boldsymbol\Lambda(\operatorname{F}_{4,-20},d)$ for $d\geq 3$.

\begin{lem}\label{Lem_Mdwa_rk1}
	Let $G$ be either $\operatorname{F}_{4,-20}$, $\operatorname{SO}(n,1)$, $\operatorname{SU}(n,1)$ or $\operatorname{Sp}(n,1)$ ($n\geq 2$). For every $d\geq 2$, $G$ is $M_d$-weakly amenable. Moreover,
	\begin{align*}
		&\boldsymbol\Lambda(G,d)=1  & &\text{if }\ G=\operatorname{SO}(n,1) \text{ or } G=\operatorname{SU}(n,1),\\
		2n-1 \leq\ &\boldsymbol\Lambda(G,d) \leq (2n-1)^d  & &\text{if }\ G=\operatorname{Sp}(n,1),\\
		21 \leq\ &\boldsymbol\Lambda(G,d) \leq (21)^d & &\text{if }\ G=\operatorname{F}_{4,-20}.
	\end{align*}
\end{lem}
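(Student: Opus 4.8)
The plan is to trap $\boldsymbol\Lambda(G,d)$ between a lower bound given by the Cowling--Haagerup constant $\boldsymbol\Lambda(G)=\boldsymbol\Lambda(G,2)$ and an upper bound obtained by pushing the approximate identities of \cite{Ver2} through the inclusion $B_\theta(G)\hookrightarrow M_d(G)$ of norm $\leq\theta^d$ provided by Lemma \ref{Lem_Btheta_Md}. As a preliminary reduction I may assume that $G$ is connected, simple, with finite centre, and $\operatorname{rank}_\R G=1$: this is automatic for $\operatorname{SU}(n,1)$, $\operatorname{Sp}(n,1)$ and $\operatorname{F}_{4,-20}$, while for $\operatorname{SO}(n,1)$ one reduces to its index-two identity component $\operatorname{SO}_0(n,1)$, which does not affect $\boldsymbol\Lambda(\,\cdot\,,d)$ (by \cite[Corollary 0.6]{Bat2} together with the usual finite-extension argument, exactly as in the case $d=2$).

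For the lower bounds, recall from the introduction that each inclusion $M_{k+1}(G)\hookrightarrow M_k(G)$ is contractive, so $d\mapsto\boldsymbol\Lambda(G,d)$ is non-decreasing and $\boldsymbol\Lambda(G,d)\geq\boldsymbol\Lambda(G,2)=\boldsymbol\Lambda(G)$ for every $d\geq 2$. Inserting the values from Theorem \ref{Thm_CH} gives $\boldsymbol\Lambda(G,d)\geq 2n-1$ when $G\approx\operatorname{Sp}(n,1)$ and $\boldsymbol\Lambda(G,d)\geq 21$ when $G\approx\operatorname{F}_{4,-20}$; for $\operatorname{SO}(n,1)$ and $\operatorname{SU}(n,1)$ the only lower bound claimed is $\boldsymbol\Lambda(G,d)\geq 1$, which holds by definition.

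For the upper bounds, fix $d\geq 2$ and $\theta>\boldsymbol\Lambda(G)$. Here the key input is \cite{Ver2}: since $\operatorname{rank}_\R G=1$, the group $G$ admits a net $(\varphi_i)$ in $C_c(G)$ — built from matrix coefficients of the uniformly bounded representations of \cite{Doo} by a Fej\'er-type averaging, much as in the proof of Proposition \ref{Prop_an_ub_rep} — with $\sup_i\|\varphi_i\|_{B_\theta(G)}\leq 1$ and $\varphi_i\to 1$ uniformly on compact subsets of $G$. By Lemma \ref{Lem_Btheta_Md}, each $\varphi_i$ lies in $M_d(G)$ and satisfies $\|\varphi_i\|_{M_d(G)}\leq\theta^d\|\varphi_i\|_{B_\theta(G)}\leq\theta^d$. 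Hence, for any compact $K\subseteq G$, any $\varepsilon>0$, and any $C>\theta^d$, choosing $i$ far enough along the net produces $\varphi\in C_c(G)$ with $\|\varphi\|_{M_d(G)}<C$ and $\sup_{x\in K}|\varphi(x)-1|<\varepsilon$, so Proposition \ref{Prop_Mdwa_ucc} gives $\boldsymbol\Lambda(G,d)<C$. Letting $C\downarrow\theta^d$ and then $\theta\downarrow\boldsymbol\Lambda(G)$ yields $\boldsymbol\Lambda(G,d)\leq\boldsymbol\Lambda(G)^d$, which by Theorem \ref{Thm_CH} equals $1$ for $\operatorname{SO}(n,1)$ and $\operatorname{SU}(n,1)$, $(2n-1)^d$ for $\operatorname{Sp}(n,1)$, and $21^d$ for $\operatorname{F}_{4,-20}$. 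In particular $\boldsymbol\Lambda(G,d)<\infty$, so $G$ is $M_d$-weakly amenable, and together with the lower bounds this is precisely the assertion of the lemma.

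The whole weight of the argument rests on the input from \cite{Ver2}, and the delicate point there is that the approximate identity must be chosen simultaneously compactly supported, bounded by $1$ in $B_\theta(G)$, and with $\theta$ allowed to be any value strictly larger than $\boldsymbol\Lambda(G)$ — which is exactly what the norm estimates of \cite{Doo} on the uniformly bounded representations $\pi_z$ are needed for; everything downstream is a soft assembly of Lemma \ref{Lem_Btheta_Md} and Proposition \ref{Prop_Mdwa_ucc}. The real difficulty, as I see it, lies not in proving this lemma but in sharpening it: the inclusion $B_\theta(G)\hookrightarrow M_d(G)$ costs a factor $\theta$ in \emph{each} of the $d$ slots of a decomposition \eqref{phi_xi}, so this route cannot beat $\boldsymbol\Lambda(G)^d$, and determining the exact constants for $d\geq 3$ would require an $M_d$-analogue of the $G=KS$ estimate \cite[Proposition 1.6(ii)]{CowHaa}, which is not presently available — see Remark \ref{Rmk_G=KS}.
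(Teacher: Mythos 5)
Your proposal is correct and follows essentially the same route as the paper: lower bounds from the monotonicity $\boldsymbol\Lambda(G,d)\geq\boldsymbol\Lambda(G,2)=\boldsymbol\Lambda(G)$ together with Theorem \ref{Thm_CH}, and upper bounds by pushing the $B_\theta$-bounded approximate identity of \cite{Ver2} (valid for all four families via \cite{Doo}) through the norm-$\theta^d$ inclusion of Lemma \ref{Lem_Btheta_Md}. The only cosmetic differences are that you verify convergence to $1$ uniformly on compacta and invoke Proposition \ref{Prop_Mdwa_ucc}, whereas the paper transports the weak* convergence directly using the weak*-continuity of the inclusion; both are valid.
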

\begin{proof}
	For $d=2$, the result is a consequence of Theorem \ref{Thm_CH} since $M_2$-weak amenability is the same as weak amenability, and $\boldsymbol\Lambda(G,2)=\boldsymbol\Lambda(G)$. Now let $d\geq 3$ and $\theta>\boldsymbol\Lambda(G)$. It was shown in (the proof of) \cite[Theorem 1.5]{Ver2} that there is a sequence $(\varphi_n)$ in $C_c(G)$ such that
	\begin{align*}
		\limsup_{n\to\infty}\|\varphi_n\|_{B_\theta(G)}\leq 1,
	\end{align*}
	and
	\begin{align*}
		\lim_{n\to\infty}\varphi_n = 1 \qquad\text{in }\quad \sigma\big(B_\theta(G),\tilde{A}_\theta(G)\big).
	\end{align*}
	We should mention that the results in \cite{Ver2} are only stated for $\operatorname{Sp}(n,1)$ and $\operatorname{F}_{4,-20}$ because that article focuses on those groups, but they are also true for $\operatorname{SO}(n,1)$ and $\operatorname{SU}(n,1)$ since the proof depends only on the construction of representations given by \cite[Theorem 2.1]{Doo}, which is proved for all four classes of groups. By Lemma \ref{Lem_Btheta_Md}, we have
		\begin{align*}
		\limsup_{n\to\infty}\|\varphi_n\|_{M_d(G)}\leq \theta^d,
	\end{align*}
	and
	\begin{align*}
		\lim_{n\to\infty}\varphi_n = 1 \qquad\text{in }\quad \sigma\big(M_d(G),X_d(G)\big).
	\end{align*}
	We conclude that $G$ is $M_d$-weakly amenable and $\boldsymbol\Lambda(G,d)\leq\boldsymbol\Lambda(G)^d$. Since we always have $\boldsymbol\Lambda(G,d)\geq\boldsymbol\Lambda(G)$, the result follows from Theorem \ref{Thm_CH}.
\end{proof}

Now we are ready to prove Theorem \ref{Thm_Lie_gr}.

\begin{proof}[Proof of Theorem \ref{Thm_Lie_gr}]
	Let $G$ be a simple Lie group with finite centre. If $\operatorname{rank}_\R G=0$, then $G$ is compact, and therefore $\boldsymbol\Lambda(G,d)=1$ by Lemma \ref{Lem_am->Md-wa}. If $\operatorname{rank}_\R G\geq 2$, then $\boldsymbol\Lambda(G,2)=\infty$ by \cite[Theorem 1]{Haa}. Therefore $\boldsymbol\Lambda(G,d)=\infty$ for all $d\geq 3$. Now assume that $\operatorname{rank}_\R G=1$. As discussed above, $G$ is locally isomorphic to $H$, where $H$ is either $\operatorname{F}_{4,-20}$, $\operatorname{SO}(n,1)$, $\operatorname{SU}(n,1)$ or $\operatorname{Sp}(n,1)$ ($n\geq 2$). Let $Z(G)$ denote the centre of $G$. By \cite[Corollary II.5.2]{Hel2}, $G/Z(G)$ is isomorphic to $H/Z(H)$. Therefore, by Lemma \ref{Lem_Mdwa_G/K},
	\begin{align*}
		\boldsymbol\Lambda(G,d)=\boldsymbol\Lambda(G/Z(G),d)=\boldsymbol\Lambda(H/Z(H),d)=\boldsymbol\Lambda(H,d).
	\end{align*}
	The result then follows from Lemma \ref{Lem_Mdwa_rk1}.
\end{proof}


\begin{ack}
I am grateful to Mikael de la Salle for inspiring discussions. I also thank Michael Cowling for kindly answering my questions about the representation theory of Lie groups, which I was hoping to use in order to obtain an improved version of Theorem \ref{Thm_Lie_gr}; see Remark \ref{Rmk_G=KS}.
\end{ack}

\begin{funding}
This work is supported by the FONDECYT project 3230024 and the ECOS--ANID project 23003 \emph{Small spaces under action}.
\end{funding}


\bibliographystyle{plain} 

\bibliography{Bibliography}

\end{document}